\newcommand{\dist}{\text{dist}}
\newcommand{\e}{\varepsilon}
\DeclareMathOperator{\hdim}{dim_H}
\DeclareMathOperator{\spt}{spt}
\DeclareMathOperator{\Vol}{Vol}
\newtheorem{theorem}{Theorem}
\newtheorem{definition}[theorem]{Definition}
\newtheorem{proposition}[theorem]{Proposition}
\newtheorem{corollary}[theorem]{Corollary}
\newtheorem{lemma}[theorem]{Lemma}
\numberwithin{theorem}{section}
\newtheorem{remark}[theorem]{Remark}
\newcommand{\R}{\mathbb{R}}
\def\N{{\mathbb N}}
\begin{document}
\thispagestyle{empty}
\title{On the volumes of simplices determined by a subset of $\mathbb{R}^d$}

\author{Pablo Shmerkin}
\address{Department of Mathematics, the University of British Columbia. 1984 Mathematics Road, Vancouver BC V6T 1Z2, Canada}
\email{pshmerkin@math.ubc.ca}

\thanks{P.S. was partially supported by an NSERC Discovery Grant}

\author{Alexia Yavicoli}
\address{Department of Mathematics, the University of British Columbia. 1984 Mathematics Road, Vancouver BC V6T 1Z2, Canada}
\email{yavicoli@math.ubc.ca, alexia.yavicoli@gmail.com}

\keywords{patterns, configurations, simplices, volumes, Hausdorff dimension, slices, projections}
\subjclass[2020]{28A12, 28A78, 28A80}

\begin{abstract}
  We prove that for $1\le k<d$, if $E$ is a Borel subset of $\mathbb{R}^d$ of Hausdorff dimension strictly larger than $k$, the set of $(k+1)$-volumes determined by $k+2$ points in $E$ has positive one-dimensional Lebesgue measure. In the case $k=d-1$, we obtain an essentially sharp lower bound on the dimension of the set of tuples in $E$ generating a given volume. We also establish a finer version of the classical slicing theorem of Marstrand-Mattila in terms of dimension functions, and use it to extend our results to sets of ``dimension logarithmically larger than $k$''.
\end{abstract}

\maketitle

\section{Introduction and main results}

A fruitful and highly active area of analysis is concerned with the richness of patterns inside fractal sets. A classical example, which motivated much of the development of the area, is Falconer's distance set problem: Given a set $E \subseteq \R^d$, what can be said about the Hausdorff dimension, Lebesgue measure or interior of the set of distances between points in $E$, in terms of the Hausdorff dimension of $E$? 

A huge number of generalizations of Falconer's problem have been proposed, generally by looking at configurations spanned by $k\ge 3$ points instead of two points (some of these are briefly discussed at the end of this section). If we interpret the distance between two points as the volume of the one-simplex they generate, then a natural generalization is to consider the set of $k$-volumes of simplices generated by $k+1$ points in a set $E\subset\R^d$:
\[
  \Vol_k (E)= \big\{\Vol_k(x_1, \cdots, x_{k+1}): \ x_i \in E\big\} \subseteq \R_{\ge 0}.
\]
This problem was considered by Grafakos, Greenleaf, Iosevich and Palsson: in \cite[Theorem 3.7]{GGIP}, they show that if $E\subset\R^d$ is a Borel set with $\hdim(E)>d-1+\tfrac{1}{2d}$ if $d$ is even, and $\hdim(E)>d-1+\tfrac{1}{2(d-1)}$ if $d$ is odd, then $\mathcal{L}(\Vol_d(E))>0$, where $\mathcal{L}$ denotes one-dimensional Lebesgue measure. Since a $d-1$ plane determines a single volume of a $d$-simplex, namely $0$, it seems reasonable to conjecture that this is the sharp threshold. In \cite[Theorem 3.8] {GGIP}, the authors show that if this is the right threshold for $d=2$, then it is also the correct threshold in arbitrary dimensions.

In this paper, we directly establish a strong pinned form of this conjecture, which also holds for $k$-volumes of simplices in $\R^d$ for any $k\ge 1$:
\begin{theorem}\label{Thm:Main}
  Fix $d\in \N_{\ge 2}$ and $k\in\{1,\ldots,d-1\}$. Let $E \subseteq \R^d$ be a Borel set  with $\hdim(E) >k \geq 1$. Then there exist $x_{1},\ldots,x_{k}\in E$ such that the set
  \[
    \Vol_{k+1}^{(x_1,\ldots,x_{k})}(E) :=\left\{ \Vol_{k+1}(x_1,\ldots,x_{k+1}): x_{k+1}\in E \right\}
  \]
  has positive Lebesgue measure. Moreover, when $k\ge 2$, there exist $x_{1},\ldots,x_{k}\in E$ such that $\Vol_{k+1}^{(x_1,\ldots,x_{k})}(E)$ has nonempty interior.
\end{theorem}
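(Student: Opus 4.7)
The natural plan is to translate the volume statement into a pinned-distance-to-affine-subspace statement and then attack it with projection theorems. Using the classical identity
\[
\Vol_{k+1}(x_1,\ldots,x_{k+1},y)=\tfrac{1}{k+1}\,\Vol_k(x_1,\ldots,x_{k+1})\cdot\dist\bigl(y,V\bigr),\qquad V=\operatorname{aff}\{x_1,\ldots,x_{k+1}\},
\]
and noting that $\hdim E>k$ forbids $E$ from lying in any $(k-1)$-dimensional affine subspace, tuples with $V$ genuinely $k$-dimensional are abundant in $E^{k+1}$, and for every such tuple the prefactor is a positive constant. The pinned volume set is therefore a positive scalar multiple of the pinned distance set $\{\dist(y,V):y\in E\}$. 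Writing $\dist(y,V)=\|\pi_{V^\perp}(y)-\pi_{V^\perp}(x_1)\|$ with $\pi_{V^\perp}\colon\R^d\to V^\perp$ the orthogonal projection onto the $(d-k)$-dimensional complement, the task reduces to exhibiting a pinning configuration in $E$ so that $\pi_{V^\perp}(E)\subset\R^{d-k}$ has, from the pinned point $\pi_{V^\perp}(x_1)$, a pinned distance set of positive Lebesgue measure, and nonempty interior when $k\ge 2$.

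I would build the pinning configuration inductively, at each stage using a combination of Marstrand's projection theorem and Orponen-type radial projection theorems to pick the next $x_i$ inside $E$. Marstrand supplies, for a.e.\ $(d-k)$-dimensional linear subspace $W$, that $\pi_W(E)$ has Hausdorff dimension $\min(\hdim E,d-k)$, with positive $(d-k)$-dimensional Lebesgue measure as soon as $\hdim E>d-k$; Orponen-type theorems ensure that the normal directions realised by affine spans of tuples \emph{in $E$} (and not merely generically in $\R^d$) form a substantial subset of the Grassmannian. In the favourable regime $\hdim E>d-k$ (automatic once $k\ge d/2$), generic pinning yields $\pi_{V^\perp}(E)$ with positive $(d-k)$-Lebesgue measure, whence a Lebesgue density argument at any density point immediately produces positive measure for the pinned distance set. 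The nonempty interior statement for $k\ge 2$ is more delicate: it presumably uses the fact that pinning multiple points allows one to perturb the configuration slightly, so that a small range of distances is realised continuously.

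The main obstacle is the complementary regime $\hdim E\le d-k$, i.e.\ $k<d/2$. There $\pi_{V^\perp}(E)$ has Hausdorff dimension $\hdim E>k$ but no positive $(d-k)$-Lebesgue measure, so density arguments fail; a direct reduction to Falconer's pinned distance conjecture in $\R^{d-k}$ is also inadequate, since the Falconer threshold $\hdim>(d-k)/2$ exceeds the assumed $\hdim E>k$ and is anyway not known at that threshold for most dimensions. The way forward must exploit the flexibility of choosing all $k+1$ pinning points inside $E$: at each inductive step an Orponen-type radial projection theorem selects the next point of $E$ so that the normal direction to the growing affine span remains in a robust set of good directions. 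The refined Marstrand--Mattila slicing theorem advertised in the abstract is, I expect, the key new technical input that closes up this iteration and extends the reach of the argument to sets of dimension only logarithmically above $k$.
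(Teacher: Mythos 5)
Your opening reduction (base times height, hence pinned volumes are a positive multiple of $\{\dist(y,V):y\in E\}=\{|\pi_{V^\perp}(y)-\pi_{V^\perp}(x_1)|:y\in E\}$) coincides with the paper's starting point, but beyond that the proposal is a program rather than a proof, and its central mechanism does not close. You want the specific $(d-k)$-plane $V^\perp$, with $V$ spanned by $k+1$ points \emph{of $E$}, to fall in the full-measure set of planes supplied by Marstrand's projection theorem; but the family of $k$-planes spanned by $(k+1)$-tuples of $E$ can perfectly well be $\gamma_{d,k}$-null (for instance if $E$ lies in a proper affine subspace of dimension larger than $k$, every such $V^\perp$ contains a fixed line), so the two ``almost every'' statements cannot simply be intersected. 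The appeal to ``Orponen-type radial projection theorems'' does not repair this: those results concern directions spanned by \emph{pairs} of points, not normal directions of spans of $(k+1)$-tuples, and ``a substantial subset of the Grassmannian'' is in any case not the complement of the Marstrand exceptional set. On top of this you explicitly leave the regime $\hdim(E)\le d-k$ (in particular all $k<d/2$) unresolved, the nonempty-interior claim for $k\ge 2$ is only guessed at via a perturbation heuristic, and your expectation that the gauge-refined slicing theorem is the key input is off: that refinement is only used for the critical-dimension results of Section \ref{sec:dimension-k}, not for Theorem \ref{Thm:Main}.

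The missing idea, which is essentially the whole content of the paper's short proof, is to choose the \emph{direction first and the points afterwards}. Parts \eqref{it:a} (or \eqref{it:b}) and \eqref{it:c} of Theorem \ref{thm:slicing-projection} hold simultaneously for $\sigma^{d-1}$-almost every $\theta$, so one may fix a single $\theta$ with $\mathcal{L}(P_\theta(E))>0$ (nonempty interior once $\hdim(E)>2$) \emph{and} an affine hyperplane $H$ normal to $\theta$ with $\hdim(E\cap H)>k-1$. The latter forces $E\cap H$ to contain affinely independent points $y_1,\ldots,y_{k+1}$; pinning these, for $x\in E$ one has $\Vol_{k+1}(y_1,\ldots,y_{k+1},x)=c\,\dist\big(x,\operatorname{aff}(y_1,\ldots,y_{k+1})\big)$ with $c>0$, and since this span lies in $H$ the distance is at least $\dist(x,H)=|P_\theta(x)-a|$ (for $k=d-1$ the span \emph{equals} $H$ and this is an identity); the paper then bounds the measure of the pinned volume set below by $c\,\mathcal{L}(P_\theta(E))>0$, and gets nonempty interior for $k\ge2$ from part \eqref{it:b}. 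Because the genericity is placed on $\theta$ and the pinned points are found inside the slice $E\cap H$ orthogonal to it, no genericity of spans of $E$-tuples and no radial projection input are needed, and the argument covers all $1\le k\le d-1$ under the single hypothesis $\hdim(E)>k$. Without this reordering your induction has no mechanism to produce the required positive-measure projection when $\hdim(E)\le d-k$, so the proposal as it stands does not prove the theorem.
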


The proof of Theorem \ref{Thm:Main} is a very short application of the classical Marstrand-Mattila projection and slicing theorems in geometric measure theory. Nevertheless, to our knowledge this argument had not been noticed before (although we point out that a similar idea was used to study the set of angles determined by a set in \cite{HKKMMMS}). 

When $k=d-1$, we are able to obtain a much finer result. Theorem  \ref{Thm:Main} can be recast in the following form: suppose that $\hdim(E)>d-1$ for $E\subset\R^d$. Then for each $v$ in a set $V\subset [0,\infty)$ of positive measure, there is a non-empty set $X_v\subset E^{d+1}$ such that each tuple in $X_v$ spans a simplex of volume $v$. It is natural to ask whether one can also provide a lower bound on the Hausdorff dimension of $X_v$; we show that this is indeed the case, and in fact prove an essentially sharp lower bound on $\hdim(X_v)$:
\begin{theorem} \label{Thm:MainRefined}
  Let $E\subset\R^{d}$ be a Borel set with $\hdim(E)>d-1$. Then for every $t< (d+1)\dim(E)-1$ the set
  \begin{equation} \label{eq:dim-level-sets-volum}
    \Big\{ v\in\R^{+} : \hdim\{ x\in E^{d+1}: \Vol_{d}(x) =v\} \ge t \Big\}
  \end{equation}
  has positive Lebesgue measure.
\end{theorem}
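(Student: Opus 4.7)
Let $s = \hdim(E) > d-1$. Fix $\sigma \in (d-1, s)$ and, by Frostman's lemma, a compactly supported Borel measure $\mu$ on $E$ with $\mu(B(x,r)) \lesssim r^{\sigma}$. It suffices to prove the conclusion with $t = (d+1)\sigma - 1$; letting $\sigma \uparrow s$ then gives the full range $t < (d+1) s - 1$. The plan is to refine the slicing/projection argument underlying Theorem \ref{Thm:Main}: where that result produces a single base tuple of pins, I will show that \emph{many} base tuples $(x_1,\ldots,x_d) \in E^d$ work, each giving a positive-measure set of volumes, and each such volume yielding a large slice of $E$ in the last coordinate.

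\textbf{Pinned volume as projection; two Fubinis.} For an affinely independent base tuple $(x_1,\ldots,x_d) \in E^d$ with unit normal $\theta = \theta(x_1,\ldots,x_d)$ to $\mathrm{aff}(x_1,\ldots,x_d)$, a direct computation gives
\[
  \Vol_d(x_1,\ldots,x_d,y) \;=\; \tfrac{1}{d}\,V_{d-1}(x_1,\ldots,x_d)\,\bigl|\langle y - x_1,\theta\rangle\bigr|,
\]
so the fiber $\{y \in \R^d : \Vol_d(x_1,\ldots,x_d,y) = v\}$ is a union of two hyperplanes parallel to $\mathrm{aff}(x_1,\ldots,x_d)$. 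By Marstrand's slicing theorem, for $\mathcal{H}^{d-1}$-a.e.\ $\theta \in S^{d-1}$ the set $V_\theta := \{v' \in \R : \hdim(E\cap H(\theta,v')) \ge s-1\}$ has positive Lebesgue measure. The key auxiliary claim is that the pushforward $\theta_*\mu^d$ is absolutely continuous on $S^{d-1}$; this follows from an iterated Marstrand projection argument on the relative vectors $x_i - x_1$ (Gram--Schmidt-style), the point being that $\sigma > d-1$ ensures that at every step the projected Frostman measure retains dimension strictly greater than $1$. Combining these inputs with the bi-Lipschitz reparametrization $v' \leftrightarrow v$ coming from the pinned-volume formula, the set
\[
  G := \bigl\{(x_1,\ldots,x_d,v) \in E^d \times \R^+ : \hdim\{y \in E : \Vol_d(x_1,\ldots,x_d,y) = v\} \ge s - 1\bigr\}
\]
has positive $(\mu^d \times \mathcal{L}^1)$-measure. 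A second Fubini produces a positive-Lebesgue-measure set $V^* \subseteq \R^+$ such that, for every $v \in V^*$, the section $G_v := \{(x_1,\ldots,x_d) : (x_1,\ldots,x_d,v) \in G\}$ has positive $\mu^d$-mass and hence $\hdim G_v \ge d\sigma$.

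\textbf{Fibered dimension bound; main obstacle.} For $v \in V^*$, the set in \eqref{eq:dim-level-sets-volum} contains the Borel set $A_v := \{(x_1,\ldots,x_d,y) \in E^{d+1} : (x_1,\ldots,x_d) \in G_v,\ \Vol_d(x_1,\ldots,x_d,y) = v\}$, which fibers in product coordinates over $G_v \subseteq (\R^d)^d$ with each fiber (a subset of $\R^d$) of Hausdorff dimension at least $s-1$. The classical lower bound on the Hausdorff dimension of product-fibered Borel sets then gives $\hdim A_v \ge \hdim G_v + (s-1) \ge d\sigma + (s-1) \ge (d+1)\sigma - 1 = t$, completing the proof. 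The principal technical hurdle is the absolute continuity of $\theta_*\mu^d$ on $S^{d-1}$: the iterated Marstrand projection argument is delicate because it requires, at every step, that the intermediate projected measure has Frostman dimension strictly greater than $1$, which is precisely what forces the sharp threshold $\hdim(E) > d-1$.
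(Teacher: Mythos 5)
Your overall architecture (two applications of Fubini, then a fibered dimension bound via the Lipschitz-coimage inequality of Proposition \ref{prop:dim-slices}) is sound, and the final counting $d\sigma+(\sigma-1)=(d+1)\sigma-1$ would indeed give the theorem. The genuine gap is the ``key auxiliary claim'' that the pushforward $\theta_*\mu^d$ of the normal direction of the hyperplane spanned by $d$ independent $\mu$-points is absolutely continuous on $S^{d-1}$, and in particular the assertion that this follows from an ``iterated Marstrand projection argument.'' Marstrand's projection theorem only controls projections onto \emph{fixed} planes for $\mathcal{H}^{d-1}$-a.e.\ choice of plane; here the frame and the base point of your Gram--Schmidt reduction are themselves functions of the random points $x_1,\ldots,x_{d-1}\in\operatorname{spt}\mu$, so at the last step you must control the \emph{radial} projection of (a projected copy of) $\mu$ from a point lying in its own support, along a $2$-plane correlated with $\mu$. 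Finite $s$-energy with $s>1$ of a measure does \emph{not} imply its radial projection from a fixed point is absolutely continuous (cone-type examples concentrated over a Cantor set of directions show this), so the claim cannot be reduced to energy bookkeeping plus Marstrand. What is actually needed is a radial-projection theorem: in the plane this is Orponen's result that finite $s$-energy with $s>d-1$ gives $\pi_x\mu\in L^p$ for $\mu$-a.e.\ $x$, and for normals of hyperplanes spanned by $d$ points in $\R^d$ one needs the more recent higher-dimensional machinery (cf.\ \cite{OSW23,Ren23}, which this paper invokes only for the borderline planar case). So the step you flag as the ``principal technical hurdle'' is not just delicate: as justified, it is unproved, and the proposed mechanism is the wrong tool.

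It is worth noting how the paper sidesteps exactly this issue: instead of taking the base tuple from the product measure $\mu^{\times d}$ and then asking whether its spanned hyperplane is Marstrand-generic, it builds the measure on base tuples as $\rho=\int \mu_{H,a}^{\times d}\,d\lambda_{d,d-1}(H,a)$, i.e.\ all $d$ base points are drawn from a single sliced measure on a common hyperplane $H_a$, so that the pushforward of $x\mapsto W(x)$ is $\lambda_{d,d-1}$ \emph{by construction} and no radial/direction theorem is needed. The dimension apparently lost by confining the base tuple to one slice ($d(s-1)$ instead of $ds$) is recovered from the $d$-dimensional family of slices via Proposition \ref{prop:dim-slices}, giving $\hdim(F_v)\ge d(s-1)+d=ds$, after which the argument concludes as in your last step. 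If you want to salvage your version, you would need to state and prove (or correctly cite) an absolute-continuity theorem for the distribution of spanned hyperplane normals under $I_s(\mu)<\infty$, $s>d-1$; that is a substantial result in its own right, not a corollary of Marstrand's projection theorem.
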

At least when $E$ has equal Hausdorff and packing dimension, the numerology in this theorem is sharp (up to the endpoint): by Proposition \ref{prop:dim-slices} below, \eqref{eq:dim-level-sets-volum} implies that  $\hdim(E^d)\ge t+1$, and when $E$ has equal Hausdorff and packing dimensions one has $\hdim(E^{d+1})=(d+1)\hdim(E)$, see e.g. \cite[Theorem 8.10]{MattilaGeometry}. We are not aware of other instances of Falconer-type problems where sharp results are known for this refined ``level-set'' formulation (we note that for \emph{random} sets, this numerology is known to hold for a large variety of configurations - see \cite{ShmerkinSuomala20}).

The proof of Theorem \ref{Thm:MainRefined} also uses the Marstrand-Mattila slicing theorem as a key tool, but the argument is more involved.

While Theorem \ref{Thm:Main} is sharp as far as the Hausdorff dimension of $E$ is concerned, it is natural to ask whether one can provide a finer classification among sets of Hausdorff dimension $k$. For example, we do not know whether $\Vol_{k+1}(E)>0$ for all Borel sets $E\subset\R^d$ of non-$\sigma$-finite $k$-dimensional Hausdorff measure. In Section \ref{sec:dimension-k} we present some partial results: we show that $\Vol_{k+1}(E)>0$ still holds if $E$ is a $k$-dimensional set which is ``large enough'' in terms of a suitable gauge function, see Corollary \ref{Coro:MainGauge}. This is a consequence of a refined dimension function version of the Marstrand-Mattila slicing theorem, which may be of independent interest, and is presented in Section \ref{sec:slicing}.

To conclude the introduction, we note that several related Falconer-type problems have been intensively studied in the literature. The articles \cite{GGIP,EHI,GIM,GIT, GD} explore the measure of the set of $k$-volumes determined by $k$ points in a set $E\subset\R^d$ together with the origin. Many works, including \cite{EHI, GGIP,PRA22,PRA23}, investigate the size of the set of non-congruent $k$-point configurations determined by $E$. All these works use harmonic-analytic techniques, and it seems like our more direct approach here does not extend to those situations.

\section{Sharp dimension thresholds: proofs of Theorems \ref{Thm:Main} and \ref{Thm:MainRefined}}

\subsection{Preliminaries} 

We begin by recalling some key definitions and facts from geometric measure theory. Fix $1\le k<d$. Let $G(d,k)$ be the Grassmanian of $k$-dimensional subspaces of $\R^d$, and let $\gamma_{d,k}$ be the unique Borel probability measure on $G(d,k)$ which is invariant under the action of the orthogonal group $\mathbb{O}_d$. See \cite[\S 3.9]{MattilaGeometry} for more details. 

We denote the Grassmanian of \emph{affine} $k$-planes in $\R^d$ by $A(d,k)$. Given a $k$-dimensional subspace $W$ of $\R^d$ and $a\in \R^d$, we let $W_a:=W+a\in A(d,k)$. Sometimes we abuse notation and identify $W_a$ with the pair $(W,a)$. The natural measure on $A(d,k)$ is given by
  \begin{equation*}
    \lambda_{d,k}(\mathcal{A}) = \int_{G(d,k)} \mathcal{H}^1\{ a\in W^{\perp}: W_a\in \mathcal{A} \} \, d\gamma_{d,k}(W).
  \end{equation*}
See \cite[\S 3.16]{MattilaGeometry} for more details.

We denote the closed $\delta$-neighbourhood of a set $E\subset\R^d$ by $E(\delta):=\{x \in \R^d : \ \dist(x,E) \leq \delta\}$. Given a Radon measure $\mu$ on $\R^d$, we defined the \emph{sliced measures} $\mu_{W,a}$ supported on the affine plane $W_a$ by
\[
\mu_{W,a}(f):= \lim_{\delta \to 0} (2 \delta)^{-m} \int_{W_a(\delta)} f \ d\mu,\quad f\in C_0(\R^d).
\]
These measures are well-defined for $\mathcal{H}^m$-almost all $a \in V$ and depend on $(W,a)$ in a Borel manner, see \cite[\S 10.1]{MattilaGeometry}.

We denote the unit sphere in $\R^d$ by $S^{d-1}$, endowed with surface measure $\sigma^{d-1}$ (which is a multiple of $(d-1)$-dimensional Hausdorff measure $\mathcal{H}^{d-1}|_S$). For every $\theta \in S^{d-1}$, we let $L_{\theta}$ be the line through the origin and $\theta$, and $P_{\theta}:\R^d \to L_{\theta}$ be the orthogonal projection onto $L_{\theta}$.  Note that $G(d,1)$ is the quotient of $S^{d-1}$ by identifying antipodal points, and $\gamma(d,1)$ is the push-forward of $\sigma^{d-1}$ under this identification.

Given $0<s<d$, the $s$-energy of a finite Borel measure $\mu$ on $\R^d$ is defined as
\begin{equation*}
  I_s(\mu):=\iint \frac{1}{|x-y|^s} \ d\mu(x) d\mu(y).
\end{equation*}

We are now able to state the measure-theoretic versions of the Marstrand-Mattila projection and slicing theorems (see \cite[Theorems 5.4 and 5.5]{MattilaFourier} and \cite[Theorem 10.7]{MattilaGeometry}, respectively, for the proofs). 

\begin{theorem} \label{thm:measure-projection}
  Let $\mu$ be a finite Borel measure on $\R^d$ such that $I_s(\mu)<\infty$. Then:
  \begin{enumerate}[(a)]
    \item If $s>1$, then $P_\theta\mu$ is absolutely continuous with an $L^2$ density for $\sigma^{d-1}$-almost every $\theta\in S^{d-1}$.
    \item If $s>2$, then $P_\theta\mu$ is absolutely continuous with a continuous density for $\sigma^{d-1}$-almost every $\theta\in S^{d-1}$.
  \end{enumerate}
\end{theorem}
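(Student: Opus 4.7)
The plan is to use Fourier analysis in $\R^d$. Two standard identities drive the argument: first, the one-dimensional Fourier transform of the projected measure is the restriction of $\hat\mu$ to a line, namely $\widehat{P_\theta\mu}(r)=\hat\mu(r\theta)$ for $r\in\R$; and second, for a compactly supported finite Borel measure $\mu$ on $\R^d$ and $0<s<d$ one has
\[
  I_s(\mu)\asymp_{d,s} \int_{\R^d} |\hat\mu(\xi)|^2\, |\xi|^{s-d}\, d\xi.
\]
By truncating $\mu$ on larger and larger balls, we may assume $\mu$ is compactly supported.

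For part (a), I would apply Plancherel in $\R$ to get $\int_\R |\widehat{P_\theta\mu}(r)|^2\, dr = \int_\R |\hat\mu(r\theta)|^2\, dr$, then integrate over $\theta\in S^{d-1}$ and convert to polar coordinates in $\R^d$:
\[
  \int_{S^{d-1}} \int_\R |\hat\mu(r\theta)|^2\, dr\, d\sigma^{d-1}(\theta) \asymp \int_{\R^d} |\hat\mu(\xi)|^2\, |\xi|^{-(d-1)}\, d\xi.
\]
Splitting the right-hand integral at $|\xi|=1$, the low-frequency piece is dominated by $\mu(\R^d)^2$, while the high-frequency piece satisfies $|\xi|^{-(d-1)} \le |\xi|^{s-d}$ as soon as $s>1$, so it is bounded by a constant times $I_s(\mu)<\infty$. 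Fubini then gives $\widehat{P_\theta\mu}\in L^2(\R)$ for $\sigma^{d-1}$-a.e.\ $\theta$, which by Plancherel means $P_\theta\mu$ is absolutely continuous with an $L^2$ density.

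For part (b), the target is to show $\widehat{P_\theta\mu}\in L^1(\R)$ for $\sigma^{d-1}$-a.e.\ $\theta$, whence Fourier inversion produces a continuous density. For any $\alpha\in(1/2,(s-1)/2)$---a nonempty interval precisely when $s>2$---Cauchy--Schwarz yields
\[
  \Bigl(\int_\R |\hat\mu(r\theta)|\, dr\Bigr)^2 \le C_\alpha \int_\R |\hat\mu(r\theta)|^2 (1+r^2)^\alpha\, dr.
\]
Integrating over $\theta$ and converting to polar coordinates produces $\int_{\R^d} |\hat\mu(\xi)|^2 (1+|\xi|^2)^\alpha |\xi|^{-(d-1)}\, d\xi$; the choice of $\alpha$ ensures $(1+|\xi|^2)^\alpha |\xi|^{-(d-1)} \lesssim |\xi|^{s-d}$ for $|\xi|\ge 1$, so this quantity is again controlled by $\mu(\R^d)^2 + I_s(\mu)$. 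It follows that $\widehat{P_\theta\mu}\in L^1(\R)$ for $\sigma^{d-1}$-a.e.\ $\theta$, and hence $P_\theta\mu$ has a continuous density.

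The energy estimates themselves are routine polar-coordinate manipulations; the main obstacle is to set up the Fourier analysis rigorously. A priori, $P_\theta\mu$ is merely a finite measure on a line, so one cannot directly apply Plancherel or Fourier inversion. The standard workaround is to first convolve $\mu$ with an approximate identity $\psi_\e$, prove the inequalities above uniformly in $\e$, and then pass to the limit---using the lower semicontinuity of $L^2$-norms for part (a) and the uniform control of continuous densities coming from a uniform $L^1$ bound on the Fourier transform for part (b).
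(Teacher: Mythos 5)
The paper does not prove this statement: it is quoted verbatim from Mattila's book (Theorems 5.4 and 5.5 of \cite{MattilaFourier}), so there is no in-paper argument to compare against. Your Fourier-analytic proof --- the identity $\widehat{P_\theta\mu}(r)=\hat\mu(r\theta)$, polar coordinates against the energy formula $\int|\hat\mu(\xi)|^2|\xi|^{s-d}d\xi\lesssim I_s(\mu)$, and the weighted Cauchy--Schwarz step to get an $L^1$ Fourier transform when $s>2$ --- is correct and is essentially the standard proof given in that cited reference, including the routine mollification needed to justify Plancherel/inversion for measures.
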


\begin{theorem} \label{thm:measure-slicing}
  Fix $1\le k<s<d$. Let $\mu$ be a finite Borel measure on $\R^d$. Then, for $\gamma_{d,k}$-almost every $W\in G(d,d-k)$, 
  \begin{equation} \label{eq:decomp-sliced-measures}
    \mu = \int \mu_{W,a} d\mathcal{H}^{k}(a)
  \end{equation}
  and
  \begin{equation} \label{eq:energy-sliced-measures}
    \int_{G(d,d-k)}\int_{\R^k} I_{s-k}(\mu_{W,a}) \ d\mathcal{H}^{k}(a) \ d\gamma_{d,d-k}(W) \le C_d I_s(\mu).
  \end{equation}
  Here $C_d>0$ is a constant depending only on $d$.
\end{theorem}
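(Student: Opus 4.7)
My plan is a Fourier-analytic approach, based on the identity
\[
  I_s(\nu) = c_{n,s}\int_{\R^n}|\hat\nu(\xi)|^2|\xi|^{s-n}\,d\xi \qquad (0<s<n),
\]
valid for any finite, compactly supported Borel measure $\nu$ on $\R^n$.

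To produce the slices themselves, I would first approximate $\mu$ by smooth mollifications $\mu*\psi_\delta$, for which the disintegration \eqref{eq:decomp-sliced-measures} is an immediate consequence of Fubini. The existence of the weak-$*$ limit $\mu_{W,a}=\lim_{\delta\to 0}(2\delta)^{-k}\mu|_{W_a(\delta)}$ for $\gamma_{d,d-k}$-a.e.\ $W$ and $\mathcal{H}^k$-a.e.\ $a\in W^\perp$ is a differentiation-of-measures step, which can be handled by an $L^2$-maximal inequality in the transversal direction combined with the hypothesis $I_s(\mu)<\infty$ for some $s>k$.

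The quantitative estimate \eqref{eq:energy-sliced-measures} is where the Fourier computation does the real work. Fix $W\in G(d,d-k)$ and decompose $\R^d=W\oplus W^\perp$, so that $\dim W=d-k$ and $\dim W^\perp=k$; for $\xi\in\R^d$ write $\xi=\eta+\zeta$ with $\eta\in W$, $\zeta\in W^\perp$. The disintegration gives
\[
  \hat\mu(\eta+\zeta) = \int_{W^\perp} \widehat{\mu_{W,a}}(\eta)\, e^{-2\pi i\zeta\cdot a}\,d\mathcal{H}^k(a),
\]
so Plancherel in the variable $a\in W^\perp\cong\R^k$ yields
\[
  \int_{W^\perp}\bigl|\widehat{\mu_{W,a}}(\eta)\bigr|^2\,d\mathcal{H}^k(a) = \int_{W^\perp}|\hat\mu(\eta+\zeta)|^2\,d\mathcal{H}^k(\zeta).
\]
Multiplying by $|\eta|^{(s-k)-(d-k)}=|\eta|^{s-d}$, integrating over $\eta\in W$, and using the Fourier representation of $I_{s-k}(\mu_{W,a})$ on $W_a\cong\R^{d-k}$, one obtains after a change of variables
\[
  \int_{W^\perp} I_{s-k}(\mu_{W,a})\,d\mathcal{H}^k(a) \asymp \int_{\R^d}|\hat\mu(\xi)|^2\,|P_W\xi|^{s-d}\,d\xi.
\]
Averaging over $W\in G(d,d-k)$ and exchanging the order of integration, the rotational invariance of $\gamma_{d,d-k}$ yields
\[
  \int_{G(d,d-k)}|P_W\xi|^{s-d}\,d\gamma_{d,d-k}(W) = C_{d,k,s}\,|\xi|^{s-d},
\]
where $C_{d,k,s}$ is finite precisely because $s>k$ (near zero the distribution of $|P_W\xi|$ on a random $(d-k)$-plane has density $\sim\rho^{d-k-1}$). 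Combined with the Fourier formula for $I_s(\mu)$, this delivers \eqref{eq:energy-sliced-measures}.

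The main technical obstacle is that the slices $\mu_{W,a}$ are supported on lower-dimensional affine planes and do not possess true Fourier transforms as tempered distributions on $\R^d$; accordingly the chain of equalities above must be carried out first for the smooth approximations $\mu*\psi_\delta$, producing a $\delta$-uniform bound, and then concluded by Fatou's lemma and weak-$*$ convergence of the slices as $\delta\to 0$.
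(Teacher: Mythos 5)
Your proposal is correct in outline but follows a genuinely different route from the paper. The paper does not prove Theorem \ref{thm:measure-slicing} itself: it quotes it from Mattila's book (Theorem 10.7 there), whose proof is real-variable and is exactly the argument the paper adapts to gauge functions in Theorem \ref{Thm:ComparingEnergies}: bound $\int_{W^\perp} I_{s-k}(\mu_{W,a})\,d\mathcal{H}^k(a)$ by a $\liminf$ over $\delta$-neighbourhoods of the planes via Fatou and Fubini, then integrate in $W$ using the geometric estimate $\gamma_{d,d-k}\{W:|P_{W^\perp}(x-y)|\le\delta\}\le C_d\,\delta^k|x-y|^{-k}$. Your Fourier route --- the slice-Plancherel identity, weighting by $|\eta|^{s-d}$, and the average $\int_{G(d,d-k)}|P_W\xi|^{s-d}\,d\gamma_{d,d-k}(W)=C_{d,k,s}|\xi|^{s-d}$, finite precisely because $s>k$ --- is sound and is essentially the treatment of sliced measures in Mattila's Fourier-analysis book; the identities are correct once run for (compactly supported) mollifications and combined with lower semicontinuity of energies under weak convergence, as you indicate, since the inequality \eqref{eq:energy-sliced-measures} goes in the favourable direction for Fatou. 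Two caveats. First, \eqref{eq:decomp-sliced-measures} is not a soft differentiation statement: in general one only has $\int \mu_{W,a}\,d\mathcal{H}^k(a)\le\mu$ (consider a point mass), and equality for a.e.\ $W$ is equivalent to $P_{W^\perp}\mu\ll\mathcal{H}^k$, which is where the hypothesis $I_s(\mu)<\infty$ with $s>k$ and the projection theorem must really be used; your remark about an $L^2$-maximal inequality points in the right direction but this is the thinnest step of your sketch. Second, your constant a priori depends on $s$ and $k$ (the Grassmannian average degenerates as $s\downarrow k$, although the Riesz-kernel normalizing constants in fact compensate), whereas the real-variable proof yields a constant depending only on $d$, as stated. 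As for what each approach buys: yours sits naturally in the $L^2$/Fourier framework of the projection theorems, while the real-variable argument is what the paper needs anyway, since it is the one that extends to generalized energies $I_\varphi$ in Section \ref{sec:slicing}, where no Fourier representation of the energy is available.
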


We state a corollary of Theorems \ref{thm:measure-projection} and \ref{thm:measure-slicing} for sets. It is obtained by considering a Frostman measure on the set $E$ \cite[Theorem 8.8]{MattilaGeometry}.
\begin{theorem} \label{thm:slicing-projection}
  Let $E\subset\R^d$ be a Borel set.
  \begin{enumerate}[(a)]
    \item \label{it:a}  If $\hdim(E)>1$, then $\mathcal{L}(P_{\theta}(E))>0$ for $\mathcal{H}^{d-1}$-almost all $\theta\in S^{d-1}$.
    \item \label{it:b} If $\hdim(E)>2$, then $P_{\theta}(E)$ has non-empty interior for $\mathcal{H}^{d-1}$-almost all $\theta \in S^{d-1}$.
    \item \label{it:c} If $1 \leq s < \hdim(E) \leq d$, then  for $\mathcal{H}^{d-1}$- almost all $\theta \in S^{d-1}$ there is an affine hyperplane $H$ with normal $\theta$ such that 
    \[
      \hdim(E \cap H)>s-1
    \]
    (In fact, there is a positive measure family of such hyperplanes.)
  \end{enumerate}
  \end{theorem}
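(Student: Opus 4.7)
The plan is to derive all three parts from Frostman's lemma combined with the measure-theoretic statements in Theorems \ref{thm:measure-projection} and \ref{thm:measure-slicing}. By inner regularity I can first replace $E$ by a compact subset of the same Hausdorff dimension, and then Frostman's lemma provides, for each $s<\hdim(E)$, a nonzero compactly supported Borel measure $\mu$ on $E$ with $I_s(\mu)<\infty$.

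For part (a), I would pick $s>1$ and apply Theorem \ref{thm:measure-projection}(a) to conclude that $P_\theta\mu$ is absolutely continuous with an $L^2$ density for $\sigma^{d-1}$-a.e.\ $\theta$; since $\mu$ is supported on $E$, this forces $\mathcal{L}(P_\theta(E))>0$. Part (b) is structurally identical but uses $s>2$ and Theorem \ref{thm:measure-projection}(b): the continuous density of $P_\theta\mu$ must be positive on a nonempty open set, since its integral equals $\mu(\R^d)>0$, and that open set is contained in $\spt(P_\theta\mu)=P_\theta(\spt\mu)\subseteq P_\theta(E)$, where the identification of the support of the pushforward uses compactness of $\spt\mu$ and continuity of $P_\theta$.

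Part (c) is the main point. To produce a strict inequality I would first fix an intermediate $s'\in(s,\hdim(E))$ and take a Frostman measure $\mu$ on $E$ with $I_{s'}(\mu)<\infty$. Then I would invoke Theorem \ref{thm:measure-slicing} with its parameter $k$ set to $1$, so that slicing is by affine hyperplanes $W_a$ with $W\in G(d,d-1)$ and $a\in W^\perp$. For almost every hyperplane direction (equivalently, for $\sigma^{d-1}$-a.e.\ unit normal $\theta$, taking $W=\theta^\perp$), the energy inequality \eqref{eq:energy-sliced-measures} forces $I_{s'-1}(\mu_{W,a})<\infty$ for $\mathcal{H}^1$-a.e.\ $a$, while the decomposition \eqref{eq:decomp-sliced-measures} together with $\mu\neq 0$ ensures that $\mu_{W,a}\neq 0$ on a set of $a$ of positive $\mathcal{H}^1$-measure. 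Intersecting these two sets of $a$ yields a positive-measure family of affine hyperplanes $H$ with normal $\theta$ such that $\mu_{W,a}$ is a nonzero measure of finite $(s'-1)$-energy supported on $E\cap H$, whence $\hdim(E\cap H)\ge s'-1>s-1$ by the standard energy version of Frostman's lemma.

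The only subtle point is the strict inequality in (c): plugging $s$ directly into the slicing theorem would give only $\hdim(E\cap H)\ge s-1$, and the cure is exactly the slack $s<s'<\hdim(E)$ inserted before invoking Frostman. Everything else is a direct combination of the decomposition and energy parts of Theorem \ref{thm:measure-slicing}.
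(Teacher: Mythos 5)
Your proposal is correct and follows the same route the paper intends: the paper derives Theorem \ref{thm:slicing-projection} precisely by taking a Frostman measure on $E$ and feeding it into Theorems \ref{thm:measure-projection} and \ref{thm:measure-slicing}. You also handle the two points the paper leaves implicit — the passage from a continuous density to nonempty interior of $P_\theta(E)$ in (b), and the insertion of an intermediate exponent $s'\in(s,\hdim(E))$ to get the strict inequality in (c) — exactly as needed.
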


To finish this section, we recall two inequalities relating the dimension of a set and that of its projections and slices under a Lipschitz map. 
\begin{proposition} \label{prop:dim-slices}
  Let $E\subset \R^d$ and let $g:E\to\R^k$ be a locally Lipschitz map. Suppose 
  \begin{equation*}
    \hdim(g^{-1}(x)) \ge t \quad\text{for all }x\in g(E).
  \end{equation*}
  Then 
  \begin{equation*}
      \hdim(E) \ge t + \hdim(g(E)).
  \end{equation*}
\end{proposition}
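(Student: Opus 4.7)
The plan is to combine Frostman's lemma on $g(E)$ with a fibre-covering estimate, after first reducing to the case where $g$ is globally Lipschitz.

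\emph{Step 1: Lipschitz reduction.} Since $g$ is locally Lipschitz and $\R^d$ is second countable, I would write $E$ as an increasing countable union $E=\bigcup_n E_n$ on which $g|_{E_n}$ is $L_n$-Lipschitz. Fix $\varepsilon>0$ and set
\begin{equation*}
  F_n^\varepsilon := \{x\in g(E) : \hdim(g^{-1}(x)\cap E_n)>t-\varepsilon\}.
\end{equation*}
Because $g^{-1}(x)\cap E_n \nearrow g^{-1}(x)$ and $\hdim(g^{-1}(x))\ge t$, the sets $F_n^\varepsilon$ exhaust $g(E)$, so by countable stability of $\hdim$ there is $n_0$ with $\hdim(F_{n_0}^\varepsilon)\ge \hdim(g(E))-\varepsilon$. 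Thus it suffices to prove the following: if $g$ is $L$-Lipschitz on some set $E'$ and $F\subset g(E')$ is Borel with $\hdim(g^{-1}(x)\cap E')>t-\varepsilon$ for every $x\in F$, then $\hdim(E')\ge (t-\varepsilon)+\hdim(F)$. Letting $\varepsilon\downarrow 0$ at the end then yields the theorem.

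\emph{Step 2: Fibre covering via Frostman.} Fix $u<t-\varepsilon$ and $v<\hdim(F)$. By Frostman's lemma I select a Borel probability measure $\nu$ on $F$ with $\nu(B(y,r))\le Cr^v$. For any countable cover $\{U_i\}$ of $E'$ and any $x\in F$, the pieces $\{U_i\cap g^{-1}(x) : x\in g(U_i\cap E')\}$ cover $g^{-1}(x)\cap E'$, producing the pointwise bound
\begin{equation*}
  \mathcal{H}^u_\infty(g^{-1}(x)\cap E') \le \sum_{i\,:\,x\in g(U_i\cap E')} \diam(U_i)^u, \qquad x\in F.
\end{equation*}
Since $\hdim(g^{-1}(x)\cap E')>t-\varepsilon>u$ for every $x\in F$, the left-hand side is strictly positive on $F$.

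\emph{Step 3: Main obstacle and conclusion.} The technical wrinkle is that this positivity is not uniform in $x$. I would resolve it by stratifying $F=\bigcup_m B_m$ with $B_m=\{x\in F : \mathcal{H}^u_\infty(g^{-1}(x)\cap E')>1/m\}$; these sets are Borel by the standard measurability of Hausdorff-content functions in the slice variable, so countable additivity forces $\nu(B_m)>0$ for some $m$. Integrating the pointwise estimate against $\nu|_{B_m}$ and using the Frostman bound $\nu(g(U_i\cap E'))\le CL^v\diam(U_i)^v$ (since $g(U_i\cap E')$ lies in a ball of radius $\le L\diam(U_i)$) yields
\begin{equation*}
  \nu(B_m) \le m \sum_i \diam(U_i)^u \,\nu(g(U_i\cap E')) \le mCL^v \sum_i \diam(U_i)^{u+v}.
\end{equation*}
Taking the infimum over covers gives $\mathcal{H}^{u+v}_\infty(E')>0$, hence $\hdim(E')\ge u+v$. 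Sending $u\uparrow t-\varepsilon$, $v\uparrow \hdim(F)\ge \hdim(g(E))-\varepsilon$, and finally $\varepsilon\downarrow 0$, yields $\hdim(E)\ge t+\hdim(g(E))$.
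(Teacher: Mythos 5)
The paper itself does not prove this proposition; it only cites Bishop--Peres, Mattila and Federer. Your argument is in essence the standard one behind those citations: a Frostman measure on the image combined with the fibre-covering estimate
\[
\mathcal{H}^u_\infty\big(g^{-1}(x)\cap E'\big)\le \sum_{i\,:\,x\in g(U_i\cap E')}\diam(U_i)^u ,
\]
which is the dual form of Eilenberg's coarea inequality. The core computation in Steps 2--3 is correct: integrating the pointwise bound over $B_m$ against $\nu$ and using $\nu\big(g(U_i\cap E')\big)\le CL^v\diam(U_i)^v$ does yield $\mathcal{H}^{u+v}_\infty(E')\ge \nu(B_m)/(mCL^v)>0$, and the limiting procedure at the end is fine.

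There are, however, two soft spots. The substantive one is measurability. Frostman's lemma requires the target set to be Borel (or at least Suslin), and you apply it to $F_{n_0}^{\varepsilon}=\{x:\hdim(g^{-1}(x)\cap E_{n_0})>t-\varepsilon\}$; likewise you need $\nu(B_m)>0$ for some $m$. Neither set is obviously Borel: the measurability of $x\mapsto \hdim(A_x)$ and $x\mapsto\mathcal{H}^u_\infty(A_x)$ in the slice variable is genuinely delicate (in general one only gets Suslin/universally measurable sets, via Mattila--Mauldin type results), so ``Borel by the standard measurability'' is an assertion, not an argument. The clean repair inside your framework is to put the Frostman measure directly on $g(E)$ (analytic when $E$ is Borel) and to carry out \emph{both} stratifications --- over the Lipschitz pieces $E_n$ and over the content levels $1/m$ --- using the outer measure $\nu^*$: Borel regular outer measures are continuous along increasing sequences of arbitrary sets, so some piece has positive outer measure, and Step 3 then runs with upper integrals, needing no measurability of $B_m$ or of $g(U_i\cap E')$. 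The second, minor, point is that a locally Lipschitz map need not admit an \emph{increasing} exhaustion by sets on which it is globally Lipschitz; a countable, not necessarily nested, decomposition is all you use, and countable stability of $\hdim$ does not require nestedness. Note finally that the statement in the paper imposes no regularity on $E$, and any Frostman-based route implicitly needs $E$ Borel; the upper-integral form of Eilenberg's inequality in Federer (the reference the paper gives for the general case) avoids even that.
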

Special cases of this statement appear in  \cite[Corollary 3.3.2]{BishopPeres} and \cite[Theorem 7.7]{MattilaGeometry}; the general case is similar and be consulted in \cite[\S 2.10.25]{Federer}. By considering charts, the statement extends easily to locally Lipschitz maps between manifolds.

\subsection{Proof of Theorem \ref{Thm:Main}}

\begin{proof}[Proof of Theorem \ref{Thm:Main}]
To begin, we recall that 
\[
  \Vol_k(x_1, \cdots, x_{k+1})=\frac{1}{k} \dist(x_{k+1}, W) \Vol_{k-1}(x_1, \cdots, x_{k}),
\] 
where $W$ is the affine $(k-1)$-plane spanned by $\{x_1, \cdots, x_{k}\}$.

Since claims \eqref{it:a} and \eqref{it:c} in Theorem \ref{thm:slicing-projection} hold simultaneously for almost all $\theta$, we can fix $\theta$ and a hyperplane $H$ normal to $\theta$ so that $\hdim (E \cap H)>k-1$ and $\mathcal{L}(P_{\theta}(E))>0$. 

Since $\hdim (E \cap H)>k-1$, there exist $y_1, \cdots, y_{k+1} \in E \cap H$ which are affinely independent (otherwise, $E\cap H$ would be contained in a $(k-1)$-plane, implying that $\hdim(E\cap H)\le k-1$). Since $\mathcal{L}(P_{\theta}(E))>0$, we get
\begin{align*}
\mathcal{L}(\Vol_{k+1}(E)) &\geq \mathcal{L} \{\Vol_{k+1}(y_1, \cdots, y_{k+1}, x_{k+2}): \ x_{k+2} \in E\} \\
&\geq \frac{\Vol_{k}( y_1, \cdots, y_{k+1})}{k+1} \cdot \mathcal{L} \{\dist(x_{k+2},H): \ x_{k+2} \in E\} \\
&=\frac{\Vol_{k}( y_1, \cdots, y_{k+1})}{k+1} \cdot \mathcal{L}(P_{\theta}(E))>0.
\end{align*}

The claim of non-empty interior when $k\ge 2$ (so that $\hdim(E)>2$) follows in the same way, using claim \eqref{it:b} of Theorem \ref{thm:slicing-projection} instead of \eqref{it:a}.
\end{proof}

\subsection{Proof of Theorem \ref{Thm:MainRefined}}

\begin{proof}[Proof of Theorem \ref{Thm:MainRefined}]
  Fix $d-1<s<\hdim(E)$. By Frostman's Lemma \cite[Theorem 8.8]{MattilaGeometry}, there is a Borel probability measure $\mu$ supported on $E$ such that $I_{s}(\mu)<+\infty$.

  By Theorem \ref{thm:measure-slicing}, for $\gamma_{d,d-1}$-almost every $H\in G(d,d-1)$ there is a family of sliced measures $\{\mu_{H,a}:a\in H^{\perp}\}$ supported on $H_a$ and depending measurably on $(H,a)$, such that \eqref{eq:decomp-sliced-measures} and \eqref{eq:energy-sliced-measures} hold.

  Next, we define a measure $\rho$ on $E^{d}$ as
  \begin{equation*}
    \rho:=\int \mu_{H,a}^{\times d} \, d\lambda_{d,d-1}(H,a) = \int_{G(d,d-1)}\int_{H^{\perp}}  \mu_{H,a}^{\times d} \,d\mathcal{H}^{1}(a) \,d\gamma_{d,d-1}(H),
  \end{equation*}
  where $\mu_{H,a}^{\times d}$ denotes the $d$-fold Cartesian power of $\mu_{H,a}$.
   
  For any $H\in G(d,d-1)$, let 
  \begin{equation} \label{eq:G-H-mu}
    \mathcal{G}_H^{\mu} = \left\{ a\in H^{\perp} : |\mu_{H,a}|>0 \text{ and } I_{s-1}( \mu_{H,a})<\infty \right\}.
  \end{equation}
 It follows from Theorem \ref{thm:measure-slicing} that $\mathcal{H}^1(\mathcal{G}_H^{\mu})>0$ for $H$ in a subset  $G^{\mu}(d,d-1)\subset G(d,d-1)$ of full $\gamma_{d,d-1}$-measure. 
  
We claim that $\Vol_{d-1}(x_1,\ldots,x_{d})>0$ for $\rho$-almost all $(x_{1},\ldots,x_{d})$. Indeed, let $H\in G^{\mu}(d,d-1)$ and $a\in \mathcal{G}_H^{\mu}$, so that $\mu_{H,a}$ is a finite Borel measure on $H_a$ with $I_{s-1}(\mu_{H,a})<\infty$. Since $s-1>d-2$, $\mu_{H,a}$ can't give positive mass to any $(d-2)$-plane. Hence, for any fixed affinely independent $x_1,\ldots, x_j\in H_a$ with $j\le d-1$, we have that $x_1,\ldots,x_j,x_{j+1}$ are affinely independent for $\mu_{H,a}$-almost all $x_{j+1}$. The claim now follows from Fubini and induction in $j$.

By the claim, the map $x\mapsto W(x)$, where $W(x)$ is the affine hyperplane determined by $x=(x_1,\ldots,x_{d})\in E^{d}$, is well-defined $\rho$-almost everywhere.

Given $x=(x_1,\ldots,x_{d})$ with $W(x)=H_a$, let
  \begin{equation} \label{eq:def-Vol-x}
    \widetilde{\Vol}_{d}(x) = \left\{\frac{\Vol_{d-1}(x)}{d} \cdot \big|b-a\big| : b\in \mathcal{G}_{H}^{\mu} \right\}.
  \end{equation}
Since (by the claim and the definition of $\rho$) the push-forward of $W(x)$ under $\rho$ is well-defined and equals $\lambda_{d,d-1}$, and since $\mathcal{H}^1(\mathcal{G}_{H}^{\mu})>0$ for $\gamma_{d,d-1}$-almost all $H$, it follows that 
\begin{equation*}
\mathcal{L}\left(\widetilde{\Vol}_{d}(x)\right)>0 \quad\text{for $\rho$-almost all $x\in E^{d}$}.
\end{equation*}
Moreover, $\widetilde{\Vol}_{d}(x)$ for $x=(x_1,\ldots,x_{d})\in E^{d}$ is a subset of the set of volumes of simplices generated by $x_{1},\ldots,x_{d}$ and a final point $x_{d+1}\in E$.
  
By Fubini's theorem, 
  \begin{equation*}
   \big(\rho\times\mathcal{L}\big)\big\{(x,v): v\in \widetilde{\Vol}_{d}(x)\big\}>0
  \end{equation*}
  and hence, by Fubini's theorem again, there is a set $V\subset [0,\infty)$ with $\mathcal{L}(V)>0$ such that for all $v\in V$ we have
  \begin{equation} \label{eq:def-Fv}
    \rho(F_v)>0, \quad\text{where }F_v=\big\{ x\in E^{d}: v\in \widetilde{\Vol}_{d}(x) \big\}.
  \end{equation}

  We claim that for any set $F$ with $\rho(F)>0$ we have
  \begin{equation} \label{eq:Fv-claim}
    \hdim(F) \ge ds.
  \end{equation}
  Indeed, the map $W(x)$ is locally Lipschitz on its domain and, as we saw before, is well-defined $\rho$-almost everywhere. Moreover, by the definition of $\rho$, the image $W(F)$ has $\gamma_{d,d-1}$-measure $>0$, and in particular full Hausdorff dimension $d$. 
  
  Since, by Theorem \ref{thm:measure-slicing}, $I_{s-1}(\mu_{H,a})<\infty$ for $\gamma_{d,d-1}$-almost all $(H,a)$, we may assume without loss of generality that $I_{s-1}(\mu_{H,a})<\infty$ for all $(H,a)\in W(F)$. It follows that, for any $H_a\in W(F)$,
  \begin{equation*}
    I_{d(s-1)}\big(\mu_{H,a}^{\times d}|_{F}\big) \le I_{d(s-1)}\big(\mu_{H,a}^{\times d}\big)<\infty,
  \end{equation*}
   and hence 
  \begin{equation*}
   \hdim\{ x\in F: W(x)=H_a \} \ge \hdim\big(F\cap H_a^{\times d}\big) \ge d(s-1).
  \end{equation*}
  Proposition \ref{prop:dim-slices} applied to $F$ and the map $W$  now yields the claimed bound \eqref{eq:Fv-claim}.

Fix $v\in V$ for the rest of the proof. Pick  $x\in F_v$ and let $W(x)=H_a$. By the definitions  \eqref{eq:def-Vol-x} and \eqref{eq:def-Fv}, there exists $b\in \mathcal{G}_H^{\mu}$ such that 
  \begin{equation*}
    v=\frac{\Vol_d(x)}{d}\cdot  |b-a|.
  \end{equation*}
  By the definition \eqref{eq:G-H-mu}, it follows that $H_a=H_b$ and $I_{s-1}(H_a)<\infty$. In particular, $E\cap H_a$ has Hausdorff dimension $\ge s-1$.
  
  We have shown that  $\Vol_{d}(x_1,\ldots,x_{d+1})=v$ for all $(x_1,\ldots,x_{d+1})$ in the set 
  \begin{equation} \label{eq:set-volume-v}
    \big\{ (x_1,\ldots,x_{d+1}): (x_1,\ldots,x_d)\in F_v, x_{d+1}\in E\cap W(x_1,\ldots,x_d)\big\}.
  \end{equation}
  Applying Proposition \ref{prop:dim-slices} to the projection of this set to the last coordinate, the claim \eqref{eq:Fv-claim} yields that the set defined in \eqref{eq:set-volume-v} has Hausdorff dimension $\ge (d+1)s-1$.
  Since $s$ is arbitrarily close to $\hdim(E)$, this completes the proof.
\end{proof}

\section{A finer slicing theorem}
\label{sec:slicing}

In this section we obtain a finer version of the Marstrand-Mattila slicing theorem \cite[Theorem 10.10]{MattilaGeometry}, in terms of gauge functions. We begin by recalling the definition of gauge functions and generalized Hausdorff measures, and then we state the theorem. 

\begin{definition}[Gauge functions]
We say that $\varphi:\R_{\geq 0}\to\R_{\geq 0}$ is a \emph{gauge function} (or dimension function) if it right-continuous, increasing,  $\varphi(0)=0$, and $\varphi(t)>0$ if $t>0$. We denote the set of all gauge functions by $\mathcal{G}$

We endow $\mathcal{G}$ with the partial order 
\[
  \varphi_2 \prec \varphi_1 \text{ if } \lim_{x \to 0^+} \frac{\varphi_1(x)}{\varphi_2(x)} =0.
\]
\end{definition}

\begin{definition}[Generalized Hausdorff measures]
Let $\varphi\in\mathcal{G}$. We define the generalized Hausdorff measure associated to $\varphi$ as
\[
  \mathcal{H}^\varphi(E):=\lim_{\delta\to 0}  \mathcal{H}^\varphi_\delta (E)\in [0,+\infty],
\]
where $\mathcal{H}^\varphi_\delta (E):=\inf \left\{ \sum_{i}\varphi(|U_i|) : \{U_i\}_{i} \text{ is a } \delta \text{-covering of } E \right\}$.
\end{definition}
It is well known and easy to see that if $\varphi_2 \prec \varphi_1$ and $\mathcal{H}^{\varphi_2}(E)>0$ for some set $E$, then $E$ has non-$\sigma$-finite $\mathcal{H}^{\varphi_1}$-measure.

\begin{definition}[Generalized energies]
Let $\varphi$ be a gauge function, and let $\mu$ be a Radon measure on $\R^d$. We define the $\varphi$-energy of $\mu$ as
\[
I_{\varphi}(\mu):=\iint \frac{1}{\varphi(|x-y|)} \ d\mu(x) d\mu(y).
\]
\end{definition}

Recall that if $\tilde{\varphi}:\R\to\R$ is a right-continuous function,  its \emph{pseudo-inverse} is defined as
\[
{\tilde{\varphi}}^{-1}(y):=\inf \{x \in \R: \tilde{\varphi}(x) \geq y\}.
\]
Because of right-continuity, we have $\tilde{\varphi} ({\tilde{\varphi}}^{-1}(y))=y$ for all $y$.

\begin{theorem}\label{Thm:HausdorffMeasures}
Fix integers $1\le m < d$. Let $\varphi, \psi$ be gauge functions such that
\begin{equation} \label{eq:assumption-gral-slicing}
  \int_{0}^{1} r^{-2} (\varphi\circ [x^m\psi]^{-1})(r)\,dr<\infty
\end{equation}
Let $E \subseteq \R^d$ be a Borel set with $\mathcal{H}^{\varphi}(E)>0$. Then, for $\gamma_{d,d-m}$-almost every $W\in G(d,d-m)$,
 \[
 \mathcal{H}^{\psi}(E \cap W_a)>0 \text{ for a set of } a \in W^{\perp} \text{ of positive } \mathcal{H}^m \text{-measure}.
 \]
\end{theorem}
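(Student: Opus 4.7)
The strategy is to mimic the classical proof of the Marstrand--Mattila slicing theorem \cite[Chapter~10]{MattilaGeometry} with the standard $s$-energy replaced by generalized energies built from the gauge functions. I set $\chi(r):=r^m\psi(r)$, which is itself a gauge function with pseudo-inverse $\chi^{-1}=[x^m\psi]^{-1}$. By the gauge-function version of Frostman's lemma (Rogers' theorem), the hypothesis $\mathcal{H}^\varphi(E)>0$ yields a Borel probability measure $\mu$ supported on a compact subset of $E$ satisfying $\mu(B(x,r))\le C\varphi(r)$ for every $x\in\R^d$ and $r>0$.

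The key step is the following generalized slicing--energy inequality:
\[
\int_{G(d,d-m)}\int_{W^\perp} I_{\psi}(\mu_{W,a})\,d\mathcal{H}^m(a)\,d\gamma_{d,d-m}(W)\le C_d\, I_\chi(\mu).
\]
I would prove this by following the proof of \eqref{eq:energy-sliced-measures} in \cite[Chapter~10]{MattilaGeometry}: approximate $\mu_{W,a}$ by the $\delta$-thickenings $(2\delta)^{-m}\mu|_{W_a(\delta)}$, apply Fubini, and use the gauge-independent geometric estimate
\[
\int_{G(d,d-m)}\int_{W^\perp}\mathbf{1}\bigl[\{x,y\}\subset W_a(\delta)\bigr]\,d\mathcal{H}^m(a)\,d\gamma_{d,d-m}(W)\lesssim \frac{\delta^m}{|x-y|^m}.
\]
The kernel $1/\psi(|x-y|)$ enters only as a diagonal weight, producing $|x-y|^{-m}\psi(|x-y|)^{-1}=1/\chi(|x-y|)$ on the right-hand side.

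Next, I would verify that $I_\chi(\mu)<\infty$ using hypothesis \eqref{eq:assumption-gral-slicing}. Stieltjes integration by parts gives
\[
\int\frac{d\mu(y)}{\chi(|x-y|)}=\int_0^\infty \frac{\mu(B(x,r))}{\chi(r)^2}\,d\chi(r).
\]
The contribution from $r\ge 1$ is bounded since $\mu$ is a probability measure; for $r<1$ the Frostman bound $\mu(B(x,r))\le C\varphi(r)$ combined with the substitution $u=\chi(r)$ reduces the integral to a constant multiple of $\int_0^{\chi(1)}\varphi(\chi^{-1}(u))u^{-2}\,du$, which is finite by \eqref{eq:assumption-gral-slicing}. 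Since the bound is uniform in $x$, integrating against $\mu$ yields $I_\chi(\mu)<\infty$.

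Combining the two inequalities, for $\gamma_{d,d-m}$-a.e.\ $W$ one has $I_\psi(\mu_{W,a})<\infty$ for $\mathcal{H}^m$-a.e.\ $a\in W^\perp$, while from the decomposition \eqref{eq:decomp-sliced-measures} $\mu_{W,a}\ne 0$ on a set of positive $\mathcal{H}^m$-measure. On the intersection, $\mu_{W,a}$ is a nonzero finite Borel measure on $E\cap W_a$ with finite $\psi$-energy. A standard potential-theoretic reduction then produces a subset $A\subset\spt\mu_{W,a}$ with $\mu_{W,a}(A)>0$ on which the $\psi$-potential is uniformly bounded by some $M$; this forces $\mu_{W,a}(B(x,r))\le M\psi(r)$ for every $x\in A$ and $r>0$. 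Covering $A$ by sets $U_i$ meeting $A$ (so that $U_i\subset B(x_i,|U_i|)$ for some $x_i\in U_i\cap A$) yields $\mu_{W,a}(A)\le M\sum\psi(|U_i|)$, giving the mass-distribution bound $\mathcal{H}^\psi(E\cap W_a)\ge \mu_{W,a}(A)/M>0$. The main obstacle is Step~2: while the underlying geometry is gauge-independent, care is needed to carry out the $\delta\to 0$ limit so that the right-hand side is exactly $C_d I_\chi(\mu)$, given that $\psi$ is only assumed to be right-continuous and need not be differentiable or even doubling.
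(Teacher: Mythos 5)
Your proposal is correct and follows essentially the same route as the paper: a gauge-function Frostman measure, the generalized slicing--energy inequality $\iint I_{\psi}(\mu_{W,a})\,d\mathcal{H}^m\,d\gamma_{d,d-m}\le C_d I_{x^m\psi}(\mu)$ (the paper's Theorem \ref{Thm:ComparingEnergies}), finiteness of $I_{x^m\psi}(\mu)$ from the layer-cake/Frostman computation and hypothesis \eqref{eq:assumption-gral-slicing} (the paper's Lemma \ref{Lem:FiniteEnergy}), and the restriction-to-bounded-potential argument plus the mass distribution principle (the paper's Lemma \ref{Lem:AntiFrostman}). The only point the paper spells out in more detail is why $|\mu_{W,a}|>0$ on a positive-measure set of $a$ (it verifies \eqref{eq:decomp-sliced-measures} in the gauge setting via $I_{x^m}(\mu)<\infty$ and absolute continuity of projections), but this is a routine addition to your outline.
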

A class of functions satisfying the theorem is given by $\varphi(x)= [x\cdot \log^{-a}(1/x)]^k$ and $x^m \psi(x)=[x\cdot \log^{-b}(1/x)]^{k}$, for any $k>0$, $a>1$ and $0<b<a-1$.

For the proof of this theorem, we follow the proof of the classical case as presented in \cite{MattilaGeometry}, with suitable adaptations. We begin by recalling the following lemma, which is a variant of Frostman's lemma for gauge functions. See \cite[Lemma 3.1.1]{BishopPeres} for its proof.
\begin{lemma}[Generalized Frostman's Lemma]\label{Lem:GeneralizedFrostman}
For every $d$ there is a constant $C_d>0$ such that the following holds. Let $\varphi$ be a gauge function, and let $E \subseteq \R^d$ be a Borel set with $\mathcal{H}^{\varphi}(E)>0$. Then there exists a Radon measure $\mu$ supported on $E$ such that 
\begin{equation} \label{eq:gauge-frostman}
  \mu (B(x,r))\leq \frac{C_d}{\mathcal{H}^{\varphi}(E)} \varphi(r) \text{ for all }r>0.
\end{equation}
\end{lemma}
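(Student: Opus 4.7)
The plan is to adapt the classical dyadic-tree construction of Frostman measures, with the power $r^s$ replaced by a general gauge $\varphi(r)$. By Borel regularity of $\mathcal{H}^\varphi$ (which is proved exactly as in the classical case), I reduce to the situation where $E$ is compact and $0<\mathcal{H}^\varphi(E)<\infty$.

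For each large integer $n$, I build a ``dyadic Frostman measure'' $\mu_n$ as follows. Initialize by placing mass $\varphi(\operatorname{diam}(Q))$ on each dyadic cube $Q$ of side $2^{-n}$ that meets $E$, supported inside $E\cap Q$ (say, at a single point of it). Then walk up the tree, scale by scale from $2^{-n+1}$ to $1$: at each dyadic cube $Q$ of the current scale, if the total mass in $Q$ exceeds $\varphi(\operatorname{diam}(Q))$, rescale the masses of all descendants of $Q$ by a common factor so that $\mu_n(Q)=\varphi(\operatorname{diam}(Q))$. The output satisfies $\mu_n(Q)\le\varphi(\operatorname{diam}(Q))$ for every dyadic cube $Q$ of side in $[2^{-n},1]$.

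The key lower bound comes from the saturation argument. Every atom of $\mu_n$ lies in some dyadic cube which remains saturated at the end of the procedure --- namely, the topmost ancestor of the containing leaf that was ever rescaled (if any), or the leaf cube itself otherwise. Taking the \emph{maximal} such saturated cubes produces a finite disjoint collection $\{Q_j\}$ covering $\operatorname{supp}(\mu_n)\supseteq E$, and summing gives $\mu_n(\mathbb{R}^d)=\sum_j\varphi(\operatorname{diam}(Q_j))\ge\mathcal{H}^\varphi_\infty(E)>0$ by the definition of Hausdorff content. A weak-$*$ subsequential limit $\mu_0=\lim_k\mu_{n_k}$ exists because $\mu_n(\mathbb{R}^d)\le\varphi(\sqrt{d})$ uniformly and the supports lie in the compact set $E$; this yields a Radon measure supported on $E$ with $\mu_0(\mathbb{R}^d)\ge\mathcal{H}^\varphi_\infty(E)$, still satisfying the dyadic Frostman bound on every scale (right-continuity of $\varphi$ and a standard approximation absorb the boundary effects).

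Finally, to convert from dyadic cubes to Euclidean balls: for any $x$ and $r>0$, choose $k$ minimal with $\sqrt{d}\,2^{-k}\le r$, so $B(x,r)$ is covered by at most $N_d$ dyadic cubes of side $2^{-k}$, each of diameter $\le r$; monotonicity of $\varphi$ then gives $\mu_0(B(x,r))\le N_d\,\varphi(r)$ for $r<\sqrt{d}$, while for $r\ge\sqrt{d}$ one trivially has $\mu_0(B(x,r))\le\mu_0(\mathbb{R}^d)\le\varphi(\sqrt{d})\le\varphi(r)$. Setting $\mu:=\mu_0/\mathcal{H}^\varphi(E)$ produces a Radon measure on $E$ with the claimed bound $\mu(B(x,r))\le C_d\,\varphi(r)/\mathcal{H}^\varphi(E)$, where $C_d$ depends only on $d$. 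The main observation permitting arbitrary gauge functions $\varphi$ --- without any doubling assumption --- is that both the walk-up construction and the cube-to-ball conversion invoke only monotonicity of $\varphi$, never a comparison of the form $\varphi(Cr)\lesssim\varphi(r)$; the positivity of the natural lower bound $\mathcal{H}^\varphi_\infty(E)$ is automatic from $\mathcal{H}^\varphi(E)>0$, so the final normalization by $\mathcal{H}^\varphi(E)$ yields the stated inequality.
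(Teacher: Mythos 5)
Your argument is correct: it is the standard bottom-up dyadic (tree) construction of a Frostman measure, exploiting that only monotonicity of the gauge is ever used, which is precisely the proof of Lemma 3.1.1 in Bishop--Peres that the paper cites in lieu of giving its own argument. The only slips are cosmetic: the maximal saturated cubes cover $E$ itself (the support of $\mu_n$ is a finite set and does not contain $E$), and the reduction to a compact subset with $0<\mathcal{H}^{\varphi}<\infty$ rests on the Besicovitch--Davies subset theorem (or, more simply, on inner regularity of the Hausdorff content on Borel sets) rather than mere Borel regularity of $\mathcal{H}^{\varphi}$.
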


\begin{lemma}\label{Lem:FiniteEnergy}
Let $\mu$ be a probability measure supported on $E$ satisfying \eqref{eq:gauge-frostman} for some $\varphi\in\mathcal{G}$. Let $\tilde{\varphi}$ be a right-continuous function such that 
\[
  \int_{1}^\infty \varphi (\tilde{\varphi}^{-1}(1/u))\,du<\infty.
\]
Then, $I_{\tilde{\varphi}}(\mu)<\infty$.
\end{lemma}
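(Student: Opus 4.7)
The plan is to carry out the standard Frostman-to-energy argument, adapted to the general gauge function framework. First, I would observe that the Frostman-type bound \eqref{eq:gauge-frostman} together with $\varphi(0)=0$ forces $\mu$ to be non-atomic, so the kernel $1/\tilde{\varphi}(|x-y|)$ is well-defined for $\mu\times\mu$-a.e.\ pair $(x,y)$. Then, for each $x$, I would apply the layer-cake formula to rewrite the inner integral as
\[
  J(x) := \int \frac{d\mu(y)}{\tilde{\varphi}(|x-y|)} = \int_0^\infty \mu\bigl\{y\in\R^d : \tilde{\varphi}(|x-y|) < 1/u\bigr\}\,du.
\]

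Next, I would identify the level set with a metric ball. Since $\tilde{\varphi}$ is right-continuous and non-decreasing, the definition of the pseudo-inverse together with the identity $\tilde{\varphi}(\tilde{\varphi}^{-1}(t))=t$ gives $\{s\ge 0 : \tilde{\varphi}(s) < t\} \subseteq [0,\tilde{\varphi}^{-1}(t)]$, so
\[
  \bigl\{y : \tilde{\varphi}(|x-y|) < 1/u\bigr\} \subseteq \overline{B}\bigl(x,\tilde{\varphi}^{-1}(1/u)\bigr).
\]
Combining the Frostman bound with the trivial bound $\mu \le 1$, this yields the pointwise estimate
\[
  \mu\bigl\{y : \tilde{\varphi}(|x-y|) < 1/u\bigr\} \le \min\bigl(1,\ C\,\varphi(\tilde{\varphi}^{-1}(1/u))\bigr),
\]
where $C = C_d/\mathcal{H}^{\varphi}(E)$.

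Splitting the $u$-integral at $u=1$ and using the trivial bound on $[0,1]$ together with the Frostman bound on $[1,\infty)$, I would obtain
\[
  J(x) \le 1 + C \int_1^\infty \varphi(\tilde{\varphi}^{-1}(1/u))\,du,
\]
which is finite by the standing hypothesis and, crucially, independent of $x$. Integrating against $d\mu(x)$ and invoking Fubini (justified since the integrand is non-negative) then yields
\[
  I_{\tilde{\varphi}}(\mu) \le 1 + C \int_1^\infty \varphi(\tilde{\varphi}^{-1}(1/u))\,du < \infty,
\]
completing the proof.

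The only delicate step is the identification of the level set $\{\tilde{\varphi}(|x-y|)<1/u\}$ with an actual ball: one must use right-continuity and monotonicity of $\tilde{\varphi}$, in conjunction with $\tilde{\varphi}(\tilde{\varphi}^{-1}(t))=t$, to make the inclusion tight enough that the Frostman bound applies. Everything else is a routine layer-cake calculation followed by a single application of Fubini.
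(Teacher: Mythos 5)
Your proposal is correct and follows essentially the same route as the paper: layer-cake decomposition of the inner integral, inclusion of the level set in the ball $B(x,\tilde{\varphi}^{-1}(1/u))$, the Frostman bound combined with $\mu\le 1$, and a split of the $u$-integral at $u=1$. The only (harmless) cosmetic difference is that you use the strict level set $\{\tilde{\varphi}(|x-y|)<1/u\}$ — for which, as you note, monotonicity alone gives the ball inclusion — whereas the paper uses $\{\tilde{\varphi}(|x-y|)\le 1/u\}$.
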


\begin{proof}
By Fubini,
\begin{align*}
\int \frac{1}{\tilde{\varphi}(\|x-y\|)} \ d\mu(y) &= \int_{0}^{\infty} \mu\left\{y: \ \frac{1}{\tilde{\varphi}(\|x-y\|)}\geq u \right\} \ du\\
&\leq \int_{0}^{\infty} \mu\big(B(x,\tilde{\varphi}^{-1}(1/u))\big) \ du\\
&\leq \int_{0}^{1} 1 \ du + \frac{C_d}{\mathcal{H}^{\varphi}(E)} \int_{1}^{\infty} \varphi(\tilde{\varphi}^{-1}(1/u)) \ du <\infty.
\end{align*}
\end{proof}

\begin{theorem}\label{Thm:ComparingEnergies}
Let $m<d$, and let $\tilde{\varphi}$ be a continuous gauge function such that $\psi(x):=\tilde{\varphi}(x) x^{-m}$ is also a gauge function. Let $\mu$ a Radon measure on $\R^d$. Then
\[
\iint_{W^{\perp}} I_{\psi}(\mu_{W,a}) \ d\mathcal{H}^m(a) \ d\gamma_{d,d-m}(W) \leq C_d \, I_{\tilde{\varphi}}(\mu).
\]
\end{theorem}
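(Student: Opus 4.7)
The plan is to adapt the proof of the classical Marstrand--Mattila energy slicing inequality (Theorem \ref{thm:measure-slicing}) to the gauge-function setting, with the identity $\tilde{\varphi}(r) = r^m\psi(r)$ playing the role of the classical exponent split $s = m + (s-m)$.

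First I would rewrite the left-hand side as an integral against a single measure on $\R^d\times\R^d$. Define
\[
  \Lambda := \int_{G(d,d-m)}\int_{W^{\perp}} \mu_{W,a}\otimes\mu_{W,a}\,d\mathcal{H}^m(a)\,d\gamma_{d,d-m}(W),
\]
so that by Fubini the desired inequality becomes
\[
  \iint \psi(|x-y|)^{-1}\,d\Lambda(x,y) \;\le\; C_d \iint \tilde{\varphi}(|x-y|)^{-1}\,d\mu(x)\,d\mu(y).
\]
In view of $\tilde{\varphi}(r) = r^m \psi(r)$, this will follow by applying, with $h = 1/\psi$, the \emph{geometric kernel estimate}
\[
  \iint h(|x-y|)\,d\Lambda(x,y) \;\le\; C_d \iint h(|x-y|)\,|x-y|^{-m}\,d\mu(x)\,d\mu(y),
\]
valid for every nonnegative Borel $h$ on $(0,\infty)$.

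To prove the kernel estimate I would approximate $\mu_{W,a}$ by the thickened measure $\mu^{\delta}_{W,a} := (2\delta)^{-m} \mu|_{W_a(\delta)}$ and interchange the order of integration so that $d\mu(x)\,d\mu(y)$ is outermost. For fixed $x,y$ with $|x-y|=r$, the resulting $\lambda_{d,d-m}$-measure of $\{(W,a): x,y \in W_a(\delta)\}$ splits as a product: the intersection of the two $\delta$-balls $B(P_{W^{\perp}}(x),\delta)$ and $B(P_{W^{\perp}}(y),\delta)$ inside the $m$-dimensional space $W^{\perp}$ contributes a factor $\asymp \delta^m$, while the $\gamma_{d,d-m}$-measure of those $W$ for which $|P_{W^{\perp}}(x-y)|\le 2\delta$ contributes a factor $\asymp (\delta/r)^m$, by the standard $\mathbb{O}_d$-invariance computation. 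Multiplying gives a bound $\asymp \delta^{2m} r^{-m}$ whenever $r>C\delta$; after the normalization $(2\delta)^{-2m}$ built into $\mu^{\delta}_{W,a}\otimes\mu^{\delta}_{W,a}$, this produces exactly the bound $C_d |x-y|^{-m}$, and a monotone-class approximation of $h$ by bounded continuous functions supported away from $0$ lets one pass to the $\delta\to 0$ limit.

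The main obstacle will be the rigorous justification of the approximation and limiting procedure, since $\mu_{W,a}$ is defined only for $\lambda_{d,d-m}$-almost every $(W,a)$ and the product $\mu_{W,a}\otimes\mu_{W,a}$ is not entirely standard; the continuity of $\tilde{\varphi}$ assumed in the hypothesis, together with the lower semicontinuity of $r\mapsto 1/\psi(r)$ and Fatou's lemma, is needed to upgrade the inequality from the approximants $h_n \uparrow 1/\psi$ to $h = 1/\psi$ itself. Notably, the geometric kernel estimate does not involve $\psi$ or $\tilde{\varphi}$ --- it is a purely incidence-type computation relating $\mu$ to the double affine slicing --- so once the classical limiting argument is in place, the passage to arbitrary gauge pairs satisfying $\tilde{\varphi}(r)=r^m\psi(r)$ is automatic.
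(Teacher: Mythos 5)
Your plan is correct in substance and uses the same basic ingredients as the paper's proof -- slab thickening of the sliced measures, Fubini, the Grassmannian tube estimate \cite[Lemma 3.11]{MattilaGeometry}, lower semicontinuity of $r\mapsto 1/\psi(r)$ with Fatou, and the factorization $\tilde{\varphi}(r)=r^m\psi(r)$ at the very end -- but it organizes them differently. The paper thickens only \emph{one} factor of the energy: it applies \cite[Equation (10.5)]{MattilaGeometry} to bound $I_\psi(\mu_{W,a})$ by a mixed integral $d\mu\times d\mu_{W,a}$ over the slab $W_a(\delta)$, then integrates the remaining sliced factor out in $a$ using \cite[Inequality (10.6)]{MattilaGeometry} (that $\int \mu_{W,a}(A)\,d\mathcal{H}^m(a)\le\mu(A)$), so only a single $(2\delta)^{-m}$ normalization and a single limit appear; your version instead thickens \emph{both} factors and computes the joint incidence measure $\lambda_{d,d-m}\{(W,a):x,y\in W_a(\delta)\}\lesssim \delta^m\cdot(\delta/|x-y|)^m$ directly, avoiding (10.6). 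The arithmetic is the same, and your bound in fact holds for all $r>0$ (bounding $\min(1,(2\delta/r)^m)$ by $(2\delta/r)^m$), so the restriction $r>C\delta$ is unnecessary. The genuinely delicate point -- which you flag, but which deserves emphasis -- is the product approximation step: for a general Radon measure, $\mu^\delta_{W,a}$ does not converge weakly to $\mu_{W,a}$; one only has the one-sided liminf inequality (10.5) for nonnegative lower semicontinuous test functions. To bound $\iint h\,d(\mu_{W,a}\otimes\mu_{W,a})$ you therefore need to apply (10.5) twice (once in each variable, with Fatou in between), which produces two \emph{independent} thickening parameters $\delta,\delta'$ and a double liminf; this is harmless only because your incidence bound is uniform in both parameters, namely $(2\delta)^{-m}(2\delta')^{-m}\,\mathcal{H}^m\{a:|P_{W^\perp}x-a|\le\delta,\ |P_{W^\perp}y-a|\le\delta'\}\,\gamma_{d,d-m}\{W:|P_{W^\perp}(x-y)|\le\delta+\delta'\}\le C_d|x-y|^{-m}$, so the double limit causes no loss -- this is exactly the complication the paper's asymmetric argument sidesteps. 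Finally, your ``geometric kernel estimate'' should be claimed only for lower semicontinuous $h$ (which suffices, since $1/\psi$ is lower semicontinuous because $\psi$ is increasing, right-continuous and $\psi(0^+)=0$), not for arbitrary Borel $h$; and if you approximate by $h_n\uparrow 1/\psi$ you should let $h_n(0)\uparrow\infty$ rather than take $h_n$ supported away from $0$, so that the diagonal (atomic slices) is captured and the full energy $I_\psi(\mu_{W,a})$, not just its off-diagonal part, is controlled.
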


\begin{proof}
Using \cite[Equation (10.5)]{MattilaGeometry} applied to the lower semicontinuous function $x\mapsto\frac{1}{\psi(x-y)}$ and Fatou's Lemma,  we get
\begin{equation*}
    I_{\psi}(\mu_{W,a})\leq \liminf_{\delta \to 0} (2 \delta)^{-m} \iint_{W_a^{(\delta)}}  \frac{1}{\psi(\|x-y\|)} \ d\mu(x) d \mu_{W,a}(y) .
\end{equation*}
Using this, Fubini, and \cite[Inequality (10.6)]{MattilaGeometry} with 
\begin{equation*}
  B(x):=\big\{a \in W^{\perp}: \ x\in W_a^{(\delta)} \big\},
\end{equation*}
so that $P^{-1}_{W^{\perp}}(B(x))=\{y: \ |P_{W^{\perp}}(x-y)| \leq \delta\}$, we have:
\begin{align*}
I(W)&:=\int_{W^{\perp}} I_{\psi}(\mu_{W,a}) \ d\mathcal{H}^m(a) \\
&\leq \liminf_{\delta \to 0} (2 \delta)^{-m} \iint_{B(x)} \int \frac{1}{\psi(\|x-y\|)} \ d \mu_{W,a}(y) \ d\mathcal{H}^m(a) \ d\mu(x) \\
& \leq \liminf_{\delta \to 0} (2 \delta)^{-m} \iint_{\{y: \ |P_{W^{\perp}}(x-y)| \leq \delta\}} \frac{1}{\psi(\|x-y\|)} \ d \mu (y) \ d\mu(x).
\end{align*}

Using Fubini again, \cite[Lemma 3.11]{MattilaGeometry} and, finally, the definition of $\psi$, we conclude that
\begin{align*}
&\int_{G(d,d-m)} I(W)\ d\gamma_{d,d-m}(W) \\
& \le \liminf_{\delta \to 0} (2 \delta)^{-m} \iiint_{\{y: \ |P_{W^{\perp}}(x-y)| \leq \delta\}} \frac{1}{\psi(\|x-y\|)} \ d \mu (y) \ d\mu(x) \ d\gamma_{d, d-m}(W)\\
&=\liminf_{\delta \to 0} (2 \delta)^{-m} \iint \frac{1}{\psi(\|x-y\|)} \,\gamma_{d, d-m}(\{W: \ |P_{W^{\perp}}(x-y)| \leq \delta\}) \ d \mu (y) \ d\mu(x) \\
&\leq \liminf_{\delta \to 0}  (2 \delta)^{-m}  \iint \frac{\|x-y\|^m}{\tilde{\varphi}(\|x-y\|)} \,\cdot\, C_d \,\delta^m \,\|x-y\|^{-m} \ d\mu(y) \ d\mu(x)\\
&= 2^{-m} C_d I_{\tilde{\varphi}}(\mu).
\end{align*}
\end{proof}

\begin{lemma}\label{Lem:AntiFrostman}
Let $\nu$ be a positive finite measure on $\R^d$ with $E=\spt(\nu)$, and let $\psi$ be a gauge function such that $I_{\psi}(\nu)<\infty$.
Then, there exists $F \subseteq E$ of positive $\nu$ measure  and a constant $C$ so that
\[
  \nu|_F (B(x,r)) \leq C \psi (r) \text{ for all }x\in\R^d ,r>0.
\]
\end{lemma}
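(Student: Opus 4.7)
The plan is the classical Chebyshev/energy argument: identify the desired $F$ as a sublevel set of the $\psi$-potential of $\nu$. Concretely, I would define
\[
  f(x) := \int \frac{d\nu(y)}{\psi(|x-y|)}.
\]
By Fubini, $\int f\,d\nu = I_{\psi}(\nu)<\infty$, so $f$ is finite $\nu$-a.e., and Chebyshev's inequality gives some $M>0$ for which
\[
  F := \{x\in \spt(\nu) : f(x)\le M\}
\]
has $\nu(F)>0$ (any $M>I_{\psi}(\nu)/\nu(E)$ works, by a straightforward Markov estimate).

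For $x\in F$ and any $r>0$, the fact that $\psi$ is increasing immediately gives
\[
  M \ge f(x) \ge \int_{B(x,r)} \frac{d\nu(y)}{\psi(|x-y|)} \ge \frac{\nu(B(x,r))}{\psi(r)},
\]
hence $\nu|_F(B(x,r))\le \nu(B(x,r))\le M\psi(r)$. To upgrade this to the bound at arbitrary $x\in\R^d$ required by the lemma, I would use the triangle inequality: if $F\cap B(x,r)=\emptyset$ the conclusion is vacuous, and otherwise any $x_0\in F\cap B(x,r)$ satisfies $F\cap B(x,r)\subseteq B(x_0,2r)$, so
\[
  \nu|_F(B(x,r))\le \nu(B(x_0,2r)) \le M\psi(2r).
\]

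There is no deep obstacle here---the argument is essentially just Chebyshev's inequality combined with monotonicity of $\psi$. The only minor subtlety is the final step, where the factor $\psi(2r)/\psi(r)$ must be absorbed into the constant $C$; this tacitly uses a mild doubling-type regularity of $\psi$, and if one wishes to avoid any such assumption one can instead replace $\psi$ by a comparable doubling majorant before running the argument (this does not affect the relevant energy finiteness). In the intended applications to Theorem \ref{Thm:HausdorffMeasures} the gauge functions at hand are already sufficiently regular for this to be harmless.
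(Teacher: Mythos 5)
Your proof is essentially the paper's: the authors also take $F$ to be a sublevel set of the potential $x\mapsto\int\psi(\|x-y\|)^{-1}\,d\nu(y)$ (of positive $\nu$-measure by Chebyshev) and then bound $\nu|_F(B(x,r))$ by pulling out $\psi(r)$ via monotonicity of the gauge. The arbitrary-center issue you flag is genuine but is silently glossed over in the paper (its computation really only gives the bound for $x\in F$), and it is harmless because the application — lower-bounding $\mathcal{H}^{\psi}(E\cap W_a)$ by a covering argument — only needs the estimate for balls centered at points of $F$ (one re-centers each covering set at a point of $F$ it meets, with radius equal to its diameter), so your $2r$-trick with its doubling caveat is a valid, if slightly stronger-than-necessary, way to close that gap.
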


\begin{proof}
Take $C>0$ large enough so that 
\[
  F:=\left\{x: \ \int \frac{1}{\psi(\|x-y\|)} \ d\nu (y) \leq C\right\}
\]  
has positive $\nu$-measure. Then, using that a gauge function is non-decreasing,
\[
  \nu|_F (B(x,r)) =\int_{F \cap B(x,r)} \frac{\psi(\|x-y\|)}{\psi(\|x-y\|)}  d\nu (y) \leq C \,\psi(r) .
\]
\end{proof}

We can now conclude the proof of Theorem  \ref{Thm:HausdorffMeasures}.
\begin{proof}[Proof of Theorem \ref{Thm:HausdorffMeasures}]
Since by hypothesis $\mathcal{H}^{\varphi}(E)>0$, by Lemma \ref{Lem:GeneralizedFrostman}, there exists a measure $\mu$ supported on $E$ so that \[\mu (B(x,r))\leq \frac{C_d}{\mathcal{H}^{\varphi}(E)} \varphi(r) \text{ for all }r>0.\]
We may assume that $\mu$ is a probability measure.

By the assumption \eqref{eq:assumption-gral-slicing} and a change of variables,
\[
\int_{1}^\infty \varphi \circ [x^m\psi]^{-1}(1/u)  \,du<\infty.
\] 
Thus, we get from Lemma \ref{Lem:FiniteEnergy} that  $I_{x^m\psi}(\mu)<\infty$. Hence, we can apply Theorem \ref{Thm:ComparingEnergies} to get
\[
\iint_{W^{\perp}} I_{\psi}(\mu_{W,a}) \ d\mathcal{H}^m(a) \ d\gamma_{d,d-m}(W)< \infty.
\]
This implies that
\begin{equation}\label{Eq:EnergySlicedMeasure}
I_{\psi}(\mu_{W,a}) < \infty \quad\text{for }\gamma_{d,d-m}\text{-almost all } W \text{ and } \mathcal{H}^m\text{-almost all } a\in W^{\perp}.
\end{equation}

On the other hand, since $\psi\in\mathcal{G}$, we have $x^m \ge x^m \psi(x)$ if $|x|$ is sufficiently small.  Since $I_{x^m\psi}(\mu)$, it follows that also $I_{x^m}(\mu)<\infty$. Therefore, we get from \cite[Theorem 9.7]{MattilaGeometry} that $P_V(\mu) \ll \mathcal{H}^m$ for $\gamma_{d,m}$-almost every $V \in G(d,m)$. Hence, by \cite[Equation (10.6) and next line] {MattilaGeometry}, we get
\[
\int_{W^\perp} \mu_{W,a}(W_a) \ d \mathcal{H}^m(a)=\mu(\R^d)>0,
\]
for $\gamma_{d,d-m}$-almost all $W$ (note that $\gamma_{d,m}$ is the push-forward of $\gamma_{d,d-m}$ under $W\mapsto W^{\perp}$). Therefore, 
\[
|\mu_{W,a}|>0 
\] 
for $\gamma_{d,d-m}$-almost all $W$ and $a$ in a subset of $W^{\perp}$ of positive $\mathcal{H}^m$-measure. Fix such a pair $(W,a)$ for the rest of the proof.

By Lemma \ref{Lem:AntiFrostman} applied to $\mu_{W,a}$, there is a set $F\subset \spt\mu_{W,a}\subset E\cap W_a$ with $\mu_{W,a}(F)>0$ such that
\[
  \mu_{W,a}|_F (B(x,r)) \leq C\, \psi(r) \text{ for all }x,r.
\]
Then, for every covering by balls $\{B(x_i,r_i)\}_i$ of $E\cap W_a$ (and in particular of $F$) we have
\[
  0< \mu_{W,a} (F) \leq \sum_i \mu_{W,a}|_F (B(x_i,r_i)) \leq C \sum_i \psi (r_i).
\]
This shows that $\mathcal{H}^{\psi} (E \cap W_a)>0$, completing the proof.
\end{proof}

\section{Partial results in the critical dimension}
\label{sec:dimension-k}

\subsection{A sufficient condition in terms of gauge functions}

As a consequence of Theorem \ref{Thm:HausdorffMeasures}, we have a finer version of Theorem \ref{Thm:Main} for dimension functions.
\begin{corollary}\label{Coro:MainGauge}
  Fix $1\le k<d$. Let $\varphi$ be a gauge function such that there exists another gauge function $\psi$  such that $x^{k-1} \prec \psi$, and
\[
\int_{0}^{1} r^{-2}(\varphi \circ [x\psi]^{-1})(r)\,dr<\infty
\] 
In particular, this holds for $\varphi(x)= [x\cdot \log^{-a}(1/x)]^k$ for any $a>1$.

Let $E \subseteq \R^d$ ($d \geq 2$) be a Borel set with $\mathcal{H}^{\varphi}(E)>0$. Then,
\[
  \mathcal{L}\big(\Vol_{k+1}(E)\big)>0. 
\] 
\end{corollary}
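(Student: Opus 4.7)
The strategy is to follow the short template of the proof of Theorem \ref{Thm:Main}, with its two inputs---positive Lebesgue measure of some projection $P_{\theta}(E)$, and large Hausdorff dimension of some hyperplane slice---replaced by their gauge-function counterparts coming from Theorem \ref{Thm:HausdorffMeasures} and from an intermediate step of its proof. I would begin by localizing to assume $E$ is bounded (which is permissible because $\mathcal{H}^{\varphi}$ is a Radon measure, so $\mathcal{H}^{\varphi}(E \cap B(0,R)) > 0$ for large $R$), and applying Lemma \ref{Lem:GeneralizedFrostman} to produce a probability measure $\mu$ supported on $E$ with $\mu(B(x,r)) \lesssim \varphi(r)$.

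The slicing input is Theorem \ref{Thm:HausdorffMeasures} used with $m=1$: for $\gamma_{d,d-1}$-almost every $W \in G(d,d-1)$ there is a set of $a \in W^{\perp}$ of positive $\mathcal{H}^{1}$-measure such that $\mathcal{H}^{\psi}(E \cap W_{a}) > 0$. For the projection input I would reuse an intermediate step inside the proof of Theorem \ref{Thm:HausdorffMeasures}: the hypothesis on $\varphi, \psi$ together with Lemma \ref{Lem:FiniteEnergy} gives $I_{x\psi}(\mu) < \infty$, and since $\psi(x) \le 1$ for $x$ close to $0$ this also forces $I_{1}(\mu) < \infty$. Mattila's projection theorem then yields $P_{\theta}\mu \ll \mathcal{L}$, and in particular $\mathcal{L}(P_{\theta}(E)) > 0$, for $\gamma_{d,1}$-almost every direction $\theta$.

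Since the two full-measure conditions correspond under the orthogonal-complement identification $G(d,1) \leftrightarrow G(d,d-1)$, I can fix a single unit vector $\theta$, set $W = \theta^{\perp}$, and pick $a \in W^{\perp}$ such that both $\mathcal{L}(P_{\theta}(E)) > 0$ and $\mathcal{H}^{\psi}(E \cap W_{a}) > 0$ hold. The hypothesis $x^{k-1} \prec \psi$ then implies that $E \cap W_{a}$ has non-$\sigma$-finite $\mathcal{H}^{k-1}$-measure; boundedness of $E$ prevents $E \cap W_{a}$ from lying inside any $(k-1)$-dimensional affine subspace, since any bounded subset of such a subspace is of $\sigma$-finite $\mathcal{H}^{k-1}$-measure. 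Consequently $E \cap W_{a}$ contains $k+1$ affinely independent points $y_{1},\ldots,y_{k+1}$, and the volume identity used in the proof of Theorem \ref{Thm:Main} yields
\[
  \mathcal{L}\big(\Vol_{k+1}(E)\big) \;\ge\; \frac{\Vol_{k}(y_{1}, \ldots, y_{k+1})}{k+1}\cdot \mathcal{L}(P_{\theta}(E)) \;>\; 0.
\]

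The main obstacle, and the step requiring the most attention, is the projection input: since Theorem \ref{Thm:HausdorffMeasures} as stated records only slicing information, one has to open up its proof to extract the intermediate finiteness $I_{1}(\mu) < \infty$ and invoke Mattila's projection theorem separately. Once this gauge-function projection statement is in hand, the rest is a routine combination of it with the slicing conclusion of Theorem \ref{Thm:HausdorffMeasures} and the non-$\sigma$-finiteness comparison encoded in $x^{k-1} \prec \psi$.
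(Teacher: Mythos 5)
Your proposal is correct and follows essentially the same route as the paper: combine the slicing conclusion of Theorem \ref{Thm:HausdorffMeasures} (with $m=1$) with the projection statement $\mathcal{L}(P_{\theta}(E))>0$ extracted from the intermediate step $I_{x\psi}(\mu)<\infty \Rightarrow I_{1}(\mu)<\infty$ in its proof, then use $x^{k-1}\prec\psi$ to find $k+1$ affinely independent points in a slice and finish with the base-times-height identity. The paper's write-up simply asserts both conclusions (a) and (b) as consequences of Theorem \ref{Thm:HausdorffMeasures}, whereas you make explicit that the projection half requires opening up that theorem's proof; this is an accurate and slightly more careful account of the same argument.
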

\begin{proof}
As corollary of Theorem \ref{Thm:HausdorffMeasures}, we have that, for almost every $\theta \in S^{d-1}$,
\begin{enumerate}
\item[(a)] $P_{\theta}(E)$ has positive Lebesgue measure,
\item[(b)] there is an affine hyperplane $H$ with normal vector $\theta$ so that $\mathcal{H}^{\psi}(E \cap H)>0$.
\end{enumerate}
Fix $\theta$ satisfying both conclusions and a hyperplane $H$ as in (b). Since $\mathcal{H}^{\psi}(E \cap H)>0$ with $x^{k-1} \prec \psi$, the set $E\cap H$ has non-$\sigma$-finite $(k-1)$-dimensional Hausdorff measure, and hence there exist $y_1, \ldots, y_{k+1} \in E \cap H$ which are affinely independent.

Since $P_{\theta}(E)$ has positive Lebesgue measure, we conclude that
\begin{align*}
\mathcal{L}(\Vol_{k+1}(E)) &\geq \mathcal{L} \{\Vol_{k+1}(y_1, \cdots, y_{k+1}, x_{k+2}): \ x_{k+2} \in E\} \\
&\geq \frac{\Vol_{k}( y_1, \cdots, y_{k+1})}{k+1} \mathcal{L} \{\dist(x_{k+2},H): \ x_{k+2} \in E\}>0.
\end{align*}

\end{proof}

In the case $k=d-1$, we have the following extension of Theorem \ref{Thm:MainRefined}.
\begin{theorem}
  Fix $1\le k<d$. Let $\varphi$ be a gauge function such that there exists another gauge function $\psi$  such that $x^{d-2} \prec \psi$, and
  \[
  \int_{0}^{1} r^{-2}(\varphi \circ [x\psi]^{-1})(r)\,dr<\infty
  \] 
  In particular, this holds for $\varphi(x)= [x\cdot \log^{-a}(1/x)]^{d-1}$ for any $a>1$.
  
  Let $E \subseteq \R^d$ be a Borel set with $\mathcal{H}^{\varphi}(E)>0$. Then there exists a set $V\subset [0,\infty)$ with $\mathcal{L}(V)>0$ such that for all $v\in V$ we have
  \[
    \hdim\big\{ (x_1,\ldots,x_{d+1})\in E^{d+1}: \Vol_{d}(x_1,\ldots,x_{d+1})=v \big\} \ge (d+1)(d-1)-1
  \] 
\end{theorem}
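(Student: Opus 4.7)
The strategy mirrors the proof of Theorem \ref{Thm:MainRefined}, with the gauge-function slicing Theorem \ref{Thm:HausdorffMeasures} replacing the classical Marstrand--Mattila slicing theorem. First I would apply Lemma \ref{Lem:GeneralizedFrostman} to obtain a Borel probability measure $\mu$ supported on $E$ with $\mu(B(x,r))\le C\varphi(r)$. Following the proof of Theorem \ref{Thm:HausdorffMeasures} with $m=1$, I would extract a family of sliced measures $\{\mu_{H,a}\}$ for $(H,a)\in A(d,d-1)$, depending measurably on $(H,a)$, disintegrating $\mu$ via $\mu=\int\mu_{H,a}\,d\mathcal{H}^1(a)$, and satisfying both $I_\psi(\mu_{H,a})<\infty$ for $\lambda_{d,d-1}$-a.e.\ $(H,a)$ and $|\mu_{H,a}|>0$ on a set of positive $\lambda_{d,d-1}$-measure. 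With these in hand I would define the measure $\rho:=\int \mu_{H,a}^{\times d}\,d\lambda_{d,d-1}(H,a)$ on $E^d$, exactly as in the proof of Theorem \ref{Thm:MainRefined}.

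The first substantive step is to verify that $\Vol_{d-1}(x_1,\ldots,x_d)>0$ for $\rho$-a.e.\ tuple. By the Fubini--induction argument from Theorem \ref{Thm:MainRefined}, this reduces to showing that $\mu_{H,a}$ assigns zero mass to every $(d-2)$-dimensional affine plane $L\subset H_a$. Were $\mu_{H,a}(L)>0$, Lemma \ref{Lem:AntiFrostman} applied to $\mu_{H,a}|_L$ (which still has finite $\psi$-energy) would produce a subset of $L$ with positive $\mathcal{H}^\psi$-measure; but bounded subsets of a $(d-2)$-plane have finite $\mathcal{H}^{d-2}$-measure and hence zero $\mathcal{H}^\psi$-measure since $x^{d-2}\prec\psi$, a contradiction. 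Consequently the affine hyperplane $W(x)$ spanned by $x=(x_1,\ldots,x_d)$ is well-defined $\rho$-a.e. Defining $\widetilde{\Vol}_d(x)$ and $F_v$ via the analogues of \eqref{eq:def-Vol-x}--\eqref{eq:def-Fv} (with $I_{s-1}$ replaced by $I_\psi$ in the definition \eqref{eq:G-H-mu} of $\mathcal{G}_H^\mu$), two applications of Fubini yield a set $V\subset[0,\infty)$ of positive Lebesgue measure with $\rho(F_v)>0$ for every $v\in V$.

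For the dimension bound I would show $\hdim(F_v)\ge d(d-1)$. The image $W(F_v)\subset A(d,d-1)$ has positive $\lambda_{d,d-1}$-measure and hence full Hausdorff dimension $d$. For each $(H,a)\in W(F_v)$, the hypothesis $x^{d-2}\prec\psi$ combined with $I_\psi(\mu_{H,a})<\infty$ gives $I_{d-2}(\mu_{H,a})<\infty$; the standard tensor-energy inequality $|x-y|^{-d(d-2)}\le C \prod_{i=1}^d |x_i-y_i|^{-(d-2)}$ (a consequence of AM--GM applied to $|x-y|^2=\sum|x_i-y_i|^2$) then yields $I_{d(d-2)}(\mu_{H,a}^{\times d})\le C\, I_{d-2}(\mu_{H,a})^d<\infty$, so each fiber $F_v\cap W^{-1}(H_a)$ has Hausdorff dimension $\ge d(d-2)$. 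Proposition \ref{prop:dim-slices} applied to the locally Lipschitz map $W$ delivers $\hdim(F_v)\ge d+d(d-2)=d(d-1)$. Finally, for each $x\in F_v$ we have $\mathcal{H}^\psi(E\cap W(x))>0$ and hence $\hdim(E\cap W(x))\ge d-2$, again using $x^{d-2}\prec\psi$. A last application of Proposition \ref{prop:dim-slices} to the projection of the level set $\{\Vol_d=v\}\cap E^{d+1}$ onto its first $d$ coordinates produces the desired bound $\hdim\ge d(d-1)+(d-2)=(d+1)(d-1)-1$.

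The most delicate point is ensuring that Theorem \ref{Thm:HausdorffMeasures} supplies sliced measures simultaneously enjoying positivity and finite $\psi$-energy (for a $\lambda_{d,d-1}$-positive set of $(H,a)$), in a form compatible with the disintegration $\mu=\int\mu_{H,a}\,d\mathcal{H}^1(a)$ used to define $\rho$. These properties are not recorded in a single packaged statement in Section \ref{sec:slicing}, but are already present inside the proof of Theorem \ref{Thm:HausdorffMeasures}; one only needs to extract them carefully. Once this bookkeeping is done, the remainder of the argument is a direct transplant of the proof of Theorem \ref{Thm:MainRefined}.
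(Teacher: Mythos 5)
Your proposal is correct and coincides with the paper's own (sketched) proof, which obtains this theorem precisely by rerunning the argument of Theorem \ref{Thm:MainRefined} with Theorem \ref{Thm:ComparingEnergies} (case $m=1$) and the disintegration extracted from the proof of Theorem \ref{Thm:HausdorffMeasures} in place of the classical Marstrand--Mattila slicing theorem; indeed you supply details the paper leaves to the reader, such as the Lemma \ref{Lem:AntiFrostman} plus non-$\sigma$-finiteness argument showing that slices of finite $\psi$-energy cannot charge $(d-2)$-planes, and the passage from $I_{\psi}(\mu_{H,a})<\infty$ to $I_{d-2}(\mu_{H,a})<\infty$ and to $I_{d(d-2)}(\mu_{H,a}^{\times d})<\infty$. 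The only blemish, inherited verbatim from the paper's write-up of Theorem \ref{Thm:MainRefined}, is that the fibre over $x\in F_v$ in the level set should be $E\cap H_b$ for the parallel translate $H_b$ with $b\in\mathcal{G}_H^{\mu}$ and $v=\frac{\Vol_{d-1}(x)}{d}\,|b-a|$, rather than $E\cap W(x)$ (points of $W(x)$ itself span volume $0$); since $\mathcal{H}^{\psi}(E\cap H_b)>0$ by the same mass-distribution argument, this does not affect the dimension count.
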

This follows exactly as in the proof of Theorem \ref{Thm:MainRefined}, using Theorem \ref{Thm:ComparingEnergies} with $m=1$ in place of Theorem \ref{thm:measure-slicing}. We remark that \eqref{eq:decomp-sliced-measures} still holds in this case, since the assumption on $\widetilde{\varphi}$ in Theorem \ref{Thm:ComparingEnergies} implies that $x\prec \varphi$, which in turn implies that $P_{\theta}\mu$ is absolutely continuous for $\mathcal{H}^{d-1}$-almost every $\theta$ (see \cite[Theorem 9.7]{MattilaGeometry}), and in turn this yields \eqref{eq:decomp-sliced-measures} by \cite[p.141]{MattilaGeometry}. The details are left to the interested reader.

\subsection{Dimension of the set of areas}

If $E \subseteq \R^{2}$ with $\hdim(E)=1$, then the set of areas spanned by $E$ might be a singleton (if $E$ is contained in a line). But what if $E$ is not contained in a line? As a corollary of recent radial projection results \cite{OSW23}, we have the following result.
\begin{lemma}
  Let $E\subset\R^{2}$ be a Borel set with $\hdim(E)\le 1$ which is not contained in a line. Then
  \[
    \hdim(\Vol_2(E))\ge \hdim(E).
  \]
\end{lemma}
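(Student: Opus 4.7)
The plan is to combine the radial projection bound of \cite{OSW23} with Kaufman's exceptional set estimate for linear projections, via the elementary area identity
\[
  \Vol_2(x_0, y_0, z) = \tfrac{1}{2}|y_0 - x_0|\cdot |P_{\theta^{\perp}}(z - x_0)|, \qquad \theta := \pi_{x_0}(y_0),
\]
where $\pi_{x_0}(y) := (y-x_0)/|y-x_0|$ denotes the radial projection from $x_0$. This identity reduces the lemma to finding $x_0\ne y_0$ in $E$ for which the direction $\theta = \pi_{x_0}(y_0)$ is \emph{Marstrand-good}, in the sense that $\hdim P_{\theta^{\perp}}(E) \ge \hdim(E)$.

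Fix $s' < s < \hdim(E)$ and let $\mu$ be a Frostman measure on $E$ with $\mu(B(x,r))\lesssim r^{s}$. Since $E$ is not contained in a line, the radial projection theorem of \cite{OSW23} furnishes $x_0\in\spt(\mu)$ such that the push-forward $\nu := (\pi_{x_0})_{\ast}\mu$ on $S^{1}$ has finite $t$-energy for every $t<s$; in particular, $\nu$ assigns zero mass to any Borel subset of $S^1$ of Hausdorff dimension strictly less than $s$.

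Next, by Kaufman's exceptional set estimate applied to linear projections of $E$, the set
\[
  B_{s'} := \{\theta\in S^1 : \hdim P_{\theta^{\perp}}(E) < s'\}
\]
satisfies $\hdim B_{s'} \le s'$. Choosing $t\in(s',s)$ then forces $\nu(B_{s'}) = 0$; equivalently, the set of $y_0 \in E \setminus \{x_0\}$ with $\pi_{x_0}(y_0) \in B_{s'}$ has $\mu$-measure zero. Fix any $y_0$ in its complement, so that $\hdim P_{\theta^{\perp}}(E) \ge s'$. Applying the area identity, and using that translation, scaling by the positive constant $\tfrac{1}{2}|y_0-x_0|$, and the absolute value map on $\R$ all preserve Hausdorff dimension, we obtain
\[
  \hdim \Vol_2(E) \ge \hdim\{\Vol_2(x_0,y_0,z) : z\in E\} = \hdim P_{\theta^{\perp}}(E) \ge s'.
\]
Letting $s'\nearrow \hdim(E)$ finishes the argument.

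The main obstacle is extracting from \cite{OSW23} the precise \emph{energy/Frostman} version of the radial projection theorem (as opposed to a bare pointwise dimension bound), so that $\nu$ genuinely annihilates the Kaufman exceptional set $B_{s'}$. Should only the weaker statement $\hdim\pi_{x_0}(E \setminus \{x_0\}) \ge s$ be directly available, one can recover what is needed by a second application of Frostman's lemma to this planar radial image, producing a $t$-dimensional measure ($s' < t < s$) supported on directions of the form $\pi_{x_0}(y_0)$ with $y_0\in E$, and arguing as above.
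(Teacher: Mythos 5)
Your argument is correct and is essentially the paper's proof: both rest on the radial projection/direction-set theorem of \cite{OSW23}, Kaufman's exceptional set estimate, and the base-times-height identity. The paper works directly with the Hausdorff dimension of the direction set $D(E)\supseteq \pi_{x_0}(E\setminus\{x_0\})$ and picks a spanned direction outside the Kaufman exceptional set --- exactly your fallback route --- so the stronger energy/Frostman formulation of the radial projection theorem that you flag as the main obstacle is not actually needed.
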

\begin{proof}
  By \cite[Theorem 1.1]{OSW23}, the set $D(E)\subset S^1$ of directions spanned by pairs of distinct points in $E$ has Hausdorff dimension $\ge \hdim(E)$. By Kaufman's projection theorem (see \cite[Theorem 5.1]{MattilaFourier}), for any $\e>0$ there is $\theta\in D(E)$ such that $\hdim(P_\theta(E))\ge \hdim(E)-\e$. The usual base times height argument, using points $y_1,y_2\in E$ spanning the direction $\theta$, now gives the claim
\end{proof}

Very recently, K.~Ren \cite{Ren23} generalized the radial projection theorem from \cite{OSW23} to higher dimensions. It seems likely that this allows to generalize the above lemma to higher dimensions as well. We hope to address this in a future revision.


\begin{thebibliography}{10}

  \bibitem{BishopPeres}
  Christopher~J. Bishop and Yuval Peres.
  \newblock {\em Fractals in probability and analysis}, volume 162 of {\em
    Cambridge Studies in Advanced Mathematics}.
  \newblock Cambridge University Press, Cambridge, 2017.
  
  \bibitem{EHI}
  Burk Erdo\u{g}an, Derrick Hart, and Alex Iosevich.
  \newblock Multiparameter projection theorems with applications to sums-products
    and finite point configurations in the {E}uclidean setting.
  \newblock In {\em Recent advances in harmonic analysis and applications},
    volume~25 of {\em Springer Proc. Math. Stat.}, pages 93--103. Springer, New
    York, 2013.
  
  \bibitem{Federer}
  Herbert Federer.
  \newblock {\em Geometric measure theory}.
  \newblock Die Grundlehren der mathematischen Wissenschaften, Band 153.
    Springer-Verlag New York, Inc., New York, 1969.
  
  \bibitem{GD}
  Belmiro Galo and Alex McDonald.
  \newblock Volumes spanned by {$k$}-point configurations in {$\mathbb{R}^d$}.
  \newblock {\em J. Geom. Anal.}, 32(1):Paper No. 23, 26, 2022.
  
  \bibitem{GGIP}
  Loukas Grafakos, Allan Greenleaf, Alex Iosevich, and Eyvindur Palsson.
  \newblock Multilinear generalized {R}adon transforms and point configurations.
  \newblock {\em Forum Math.}, 27(4):2323--2360, 2015.
  
  \bibitem{GIM}
  Allan Greenleaf, Alex Iosevich, and Mihalis Mourgoglou.
  \newblock On volumes determined by subsets of {E}uclidean space.
  \newblock {\em Forum Math.}, 27(1):635--646, 2015.
  
  \bibitem{GIT}
  Allan Greenleaf, Alex Iosevich, and Krystal Taylor.
  \newblock On {$k$}-point configuration sets with nonempty interior.
  \newblock {\em Mathematika}, 68(1):163--190, 2022.
  
  \bibitem{HKKMMMS}
  Viktor Harangi, Tam\'{a}s Keleti, Gergely Kiss, P\'{e}ter Maga, Andr\'{a}s
    M\'{a}th\'{e}, Pertti Mattila, and Bal\'{a}zs Strenner.
  \newblock How large dimension guarantees a given angle?
  \newblock {\em Monatsh. Math.}, 171(2):169--187, 2013.
  
  \bibitem{MattilaGeometry}
  Pertti Mattila.
  \newblock {\em Geometry of sets and measures in {E}uclidean spaces}, volume~44
    of {\em Cambridge Studies in Advanced Mathematics}.
  \newblock Cambridge University Press, Cambridge, 1995.
  \newblock Fractals and rectifiability.
  
  \bibitem{MattilaFourier}
  Pertti Mattila.
  \newblock {\em Fourier analysis and {H}ausdorff dimension}, volume 150 of {\em
    Cambridge Studies in Advanced Mathematics}.
  \newblock Cambridge University Press, Cambridge, 2015.
  
  \bibitem{OSW23}
  Tuomas Orponen, Pablo Shmerkin, and Hong Wang.
  \newblock {K}aufman and {F}alconer estimates for radial projections and a
    continuum version of {B}eck's theorem.
  \newblock {\em Geom. Funct. Anal.}, accepted for publication, 2023.
  \newblock arXiv:2209.00348.
  
  \bibitem{PRA22}
  Eyvindur~Ari Palsson and Francisco~Romero Acosta.
  \newblock A {M}attila-{S}jölin theorem for simplices in low dimensions.
  \newblock Preprint, arXiv:2208.07198, Aug 2022.
  
  \bibitem{PRA23}
  Eyvindur~Ari Palsson and Francisco Romero~Acosta.
  \newblock A {M}attila-{S}j\"{o}lin theorem for triangles.
  \newblock {\em J. Funct. Anal.}, 284(6):Paper No. 109814, 20, 2023.
  
  \bibitem{Ren23}
  Kevin Ren.
  \newblock Discretized radial projections in $\mathbb{R}^d$.
  \newblock Preprint, arXiv:2309.04097v1, 2023.
  
  \bibitem{ShmerkinSuomala20}
  Pablo Shmerkin and Ville Suomala.
  \newblock Patterns in random fractals.
  \newblock {\em Amer. J. Math.}, 142(3):683--749, 2020.
  
  \end{thebibliography}

\end{document}